\DeclarePairedDelimiter{\ceil}{\lceil}{\rceil}
\DeclarePairedDelimiter{\floor}{\lfloor}{\rfloor}
\newtheorem{theorem}{Theorem}[section]
\newtheorem{corollary}[theorem]{Corollary}
\newtheorem{lemma}[theorem]{Lemma}
\def\keyword{\vspace{.5em}{\textbf{Keywords}.\,\relax}}
\title{Inverse properties of a class of pentadiagonal matrices related to higher order difference operators}
\author[1]{Bakytzhan Kurmanbek}
\author[2]{Yogi Erlangga}
\author[1,*]{Yerlan Amanbek}
\affil[1]{Nazarbayev University, Department of Mathematics, 53 Kabanbay Batyr Ave, Nur-Sultan 010000, Kazakhstan}
\affil[2]{Zayed University, Department of Mathematics, Abu Dhabi Campus, P.O. Box 144534, United Arab Emirates}
\affil[ ]{\textit{ bakytzhan.kurmanbek@nu.edu.kz, yogi.erlangga@zu.ac.ae, yerlan.amanbek@nu.edu.kz}}
\affil[*]{Corresponding author}
\begin{document}
\maketitle







\fontsize{11pt}{11pt}\selectfont

\begin{abstract}
This paper analyzes the convergence of fixed-point iterations of the form $\boldsymbol{u} = f(\boldsymbol{u})$ and the properties of the inverse of the related pentadiagonal matrices,  associated with the fourth-order nonlinear beam equation. This nonlinear problem is discretized using the finite difference method with the clamped-free and clamped-clamped boundary conditions in the one dimension. Explicit formulas for the inverse of the matrices and norms of the inverse are derived. In iterative process, the direct computation of inverse matrix allows to achieve an efficiency. Numerical results were provided.
\end{abstract}

\keyword{explicit formula, pentadiagonal matrices, finite difference, nonlinear beam equation, fixed point method}


\section{Introduction} \label{section.intro}
	
Many applications give arise to mathematical problems that involve numerical computations with pentadiagonal matrices, which require their inversion (see~\cite{Wang15LZ} and references therein). Even though inversion of a nonsingular pentadiagonal matrix can be done efficiently by a numerical linear algebra software, explicit inverse formulas are useful, for example, in a computer algebra software.

Early results on inverses of banded matrices can be traced as far back as to the work of~\cite{Allgower73, Barrett81F, Rehnqvist72} for general band matrices. Results for band Toeplitz matrices are given in~\cite{Dow03}, with explicit inverse formulas for tridiagonal matrices in~\cite{Barrett79,Jia13SE} and pentadiagonal matrices in~\cite{Rozsa87,Zhao08,Diele98L,Wang15LZ,Lv07H,kurmanbek2021inverse,elouafi2018explicit}. In addition, properties including determinants of such matrices related to finite difference operators have been investigated, e.g. in~\cite{amanbek2020explicit,kurmanbek2020proof,shitov2021determinants}.

In this study, we focus on the specific pentadiagonal matrices arising in a fixed-point iteration for numerically solving  the fourth-order nonlinear beam equation:
\begin{eqnarray}
	  \displaystyle \frac{d^4 \widehat{\phi}}{d\widehat{x}^4} = \alpha_1 e^{\displaystyle -\alpha_2 \widehat{\phi}},  \quad  \widehat{x} \in \Omega = (0,L). \notag 
\end{eqnarray}
This nonlinear equation finds applications in mechanical and civil engineering, which models, e.g., a cantilever beam subjected to swelling pressure on one side. In the above equation, the right-hand side term is the swelling pressure, which in this form is proposed by Grob~\cite{Grob72}, based on empirical studies (see, e.g.,~\cite{Wolf03F} and the references therein),  $L > 0$ is the length of the beam, and $\alpha_1, \alpha_2 > 0$ represents the mechanical property of the beam, which are assumed to be constant. 

Scaling the domain to unity using the dimensionless variable $x = \widehat{x}/L$ and setting $\phi = \alpha_2 \widehat{\phi}$ yields
    \begin{eqnarray}
    \displaystyle \frac{d^4 \phi}{d x^4} = K e^{-\phi}, \text{ in } \Omega = (0,1),  \label{eq1}
    \end{eqnarray}
where $K = \alpha_1 \alpha_2  L > 0$. We shall use this formulation throughout. For (\ref{eq1}) two types of boundary conditions are employed:
    \begin{enumerate}
    \item Clamped-Free (CF) condition:
    \begin{eqnarray}
      \phi(0) = \phi'(0) = 0  \quad \text{and} \quad \phi''(1) = \phi'''(1) = 0,  \label{eq2}
    \end{eqnarray}
    \item Clamped-Clamped (CC) condition:
    \begin{eqnarray}
    \phi(0) = \phi'(0) = 0  \quad \text{and} \quad \phi(1) = \phi'(1) = 0.  \label{eq3}
    \end{eqnarray}
    \end{enumerate}
Since $\displaystyle \frac{d^4 \phi}{d x^4} = K e^{-\phi} > 0$, obviously, $\phi = 0$ can not be a solution, even though it satisfies the boundary conditions. 

The solution of \eqref{eq1} with the boundary conditions \eqref{eq2} is concave up and an increasing function, which can be deduced from a mixed formulation of \eqref{eq2}:
    \begin{equation}
      \begin{cases}
          \displaystyle \frac{d^2 \omega}{dx^2} = K e^{-\phi},&\omega(1) = \omega'(1) = 0, \\[10pt]
          \displaystyle \frac{d^2 \phi}{dx^2} =   \omega,& \phi(0) = \phi'(0) = 0. \\
      \end{cases} \label{eq4}
    \end{equation}
From the first part of \eqref{eq4}, with $e^{-\phi} > 0$ in $\Omega$, $\omega'' > 0$, and $w'$ increases in $\Omega$. The condition $\omega'(1) = 0$ requires that $w' < 0$ in $\Omega$, which furthermore, together with the condition $\omega(1)=0$, implies that  $\omega > 0$ and decreases. From the second part of \eqref{eq4}, we have $\phi'' = \omega > 0$; thus, $\phi'$ is an increasing function in $\Omega$. Since $\phi'(0) = 0$, $\phi > 0$, which implies $\phi > 0$ and increases. This characterization also holds in the finite-difference setting based on the second-order scheme we use in this paper (c.f. Section~\ref{section.num}).

Numerical methods based on finite element methods for~\eqref{eq1} have been proposed and studied, e.g., in~\cite{Skrzypacz19BNK,Wei16LZKK}, where focus is given on the accurate approximation of the solution. This paper approaches the problem from a different angle, with emphasis put on the convergence of the iteration method of the form
    $$
      \phi = \mathcal{L}^{-1}  \left( Ke^{-\phi} \right),
    $$
where $\mathcal{L} = d^4/dx^4$, and the properties of the related iteration matrices involved. Using the second-order finite difference approach, these matrices are pentadiagonal and near Toeplitz. 

In this paper, we present explicit formulas for inverses of the specific pentadiagonal matrices and their bounds of norms, which are necessary in the convergence analysis of the fixed-point iteration. As the inverse can be formed explicitly, we are able to construct an exact norm of some of those matrices. The convergence rate for the clamped-free and clamped-clamped problems were derived and then numerical examples were presented for different parameters. 



The paper is organized as follows. Section~\ref{section.CF} is devoted to the convergence and the inverse of the iteration matrix for problem with the clamp-free condition. Similar discussion for the clamp-clamp condition is given in Section~\ref{section.CC}. Numerical results are presented in Section~\ref{section.num}, followed by some concluding remarks in Section~\ref{section.conclusion}.

\section{The case with clamped-free boundary conditions} \label{section.CF}
We consider $n+1$ equidistant grid points on the closed interval $[0,1]$, with the distance (grid size) $h = 1/n$, at which the solution of \eqref{eq1} is approximated by a finite difference scheme. Each grid point is indexed by $i = 0,\dots,n$, where $i = 0$ and $n$ correspond to the boundary points.  Throughout the paper, we shall consider $n \ge 5$ for $A$ to be a meaningful approximation to the differential operator $\mathcal{L}$, even though $n = 5$ may not be of practical interest. 
    
For the interior nodes, $1 \le i \le n-1$, the fourth-order derivative is approximated by the second-order finite difference scheme:
    $$
      \frac{d^4 \phi}{dx^4} (x_i) \approx \frac{1}{h^4} (\phi_{i-2} - 4\phi_{i-1} + 6 \phi_i - 4 \phi_{i+1} + \phi_{i+2}),
    $$
where $x_i = ih$ and $\phi_i \equiv \phi(x_i)$. For $i =2$, we just impose the boundary condition $\phi(0) \equiv \phi_0 = 0$. For $i = 1$, $\phi_{-1}$ corresponds to a fictitious point outside the computational domain, which is eliminated using the central scheme approximation to the boundary condition $\phi'(0) = 0$. Similar approaches are used for $i = n-1$ and $n$, with the boundary conditions $\phi''(1) = \phi'''(1) = 0$ be approximated by appropriate second-order finite difference schemes.
    
The resultant system of nonlinear equations is
    \begin{eqnarray}
       A \boldsymbol{u} = h^4 K \exp(\boldsymbol{-u}),  \label{eq5}
    \end{eqnarray}
    where $\boldsymbol{u} = (u_1, \dots, u_n)^T \in \mathbb{R}^n$, with $u_i \approx \phi(x_i)$, and
    \begin{eqnarray}
      A := \left[ \begin{array}{rrrrrrr} 
            7     &   -4   &   1   &   0   & \cdots &  & 0 \\
           -4     &    6   &  -4   &  \ddots  & \ddots  &   & \vdots  \\
            1     &   -4   &  \ddots & \ddots &   &   &  \\
           0 & \ddots &  \ddots   &   & \ddots & 1 & 0 \\  
                   & \ddots  &     &  \ddots & 6 & -4 & 1 \\      
         \vdots &          &  &  1 & -4 & 5 & -2 \\
         0        &\cdots   & &  0 & 2 & -4 & 2
      \end{array}\right]. \label{eq6}
    \end{eqnarray}
Here, $A \in \mathbb{R}^{n \times n} $ is a nonsymmetric, nondiagonally dominant pentadiagonal matrix. 

Our first result on $A$ is that it is nonsingular. In fact, 
we have the following theorem of the explicit inverse of matrix
 \begin{theorem} \label{theo01}
    	Let $B = [b_{i,j}]_{i,j = 1,n} \in \mathbb{R}^{n \times n}$ such that
    	\begin{align*}
    	   b_{i,j} &= \displaystyle \frac{3ij^2 + j - j^3}{6}, \quad \forall j \le i, i \in \{1,2,\dots,n\}, j \in \{1,2,\dots,n-1\}, \notag \\[.5em]
    	   b_{i,n} &= \frac{1}{2}b_{n,i}, \notag \\[.5em]
    	   b_{n,n} &= \frac{1}{12} n(2n^2+1), \notag \\[.5em]
    	   b_{i,j} &= b_{j,i}, \quad i,j \in \{1,2,\dots,n-1\}. \notag
\end{align*}
Then $B$ is the inverse of $A$, where $A = [a_{i,j}]_{i,j = 1,n}$ is given in (\ref{eq6}).
\end{theorem}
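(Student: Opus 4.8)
The plan is to verify directly that $AB = I$; since $A$ and $B$ are square, a one-sided inverse is automatically two-sided, so this suffices. Because $A$ is pentadiagonal with only three anomalous rows (the first and the last two), I would compute $(AB)_{kj} = \sum_l a_{kl}b_{lj}$ by organizing the argument according to the type of row $k$ of $A$, each row contributing at most five terms in the first index of $B$.

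First I would record the structural fact that drives everything. For a fixed column $j \in \{1,\dots,n-1\}$, the map $i \mapsto b_{i,j}$ is, for all $i \in \{0,1,\dots,n\}$ (setting $b_{0,j} := 0$), a single cubic $Q_j(i) = \tfrac16\bigl(3ji^2 + i - i^3\bigr)$ on $i \le j$ and a single affine function $P_j(i) = \tfrac{j^2}{2}i + \tfrac{j-j^3}{6}$ on $i \ge j$, the two agreeing at $i=j$; here the symmetry $b_{i,j}=b_{j,i}$ supplies the cubic branch and the entry $b_{n,j}$ the affine branch at $i=n$. Their difference is the cubic $R_j(i) := P_j(i) - Q_j(i) = \tfrac16(i-j)\bigl((i-j)^2-1\bigr)$, which vanishes at $i-j \in \{-1,0,1\}$, so that $b_{i,j} = P_j(i) - R_j(i)\,\mathbf{1}[i<j]$. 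For the last column I would note separately that $b_{i,n} = \tfrac12 Q_n(i)$ for every $i$, including $i=n$; this is where the special value $b_{n,n} = \tfrac{1}{12}n(2n^2+1)$ must be checked to coincide with $\tfrac12 Q_n(n)$.

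The interior rows $2 \le k \le n-2$ then follow at once: the stencil $(1,-4,6,-4,1)$ is the fourth-difference operator $\Delta^4$, which annihilates every polynomial of degree $\le 3$, so applying it to $b_{\cdot,j}$ kills the affine part $P_j$ and leaves only the jump $-\sum_{k+s<j} c_s R_j(k+s)$. Using the roots of $R_j$, this surviving sum equals $\delta_{kj}$, the value $1$ arising for $j=k$ from $c_{-2}R_j(k-2) = -1$. Row $2$ fits this pattern under the convention $b_{0,j}=0=Q_j(0)$, and the last column $j=n$ behaves as a pure cubic $\tfrac12 Q_n$, so $(AB)_{kn}=0$ there. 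The first row, whose anomalous diagonal entry is $7$ rather than $6$, I would dispatch by direct substitution of $Q_j(1),Q_j(2),Q_j(3)$ (and the boundary values $b_{21},b_{31}$ for small $j$), which collapses to $\delta_{1j}$.

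The main obstacle is the last two rows, $n-1$ and $n$, whose stencils $(1,-4,5,-2)$ and $(2,-4,2)$ are not symmetric difference operators, so the clean annihilation argument does not apply verbatim. Here I would compute $(AB)_{n-1,j}$ and $(AB)_{n,j}$ explicitly, splitting on $j \le n-3$, on $j \in \{n-2,n-1\}$, and on $j=n$. The affine part $P_j$ still drops out, since both stencils annihilate constants and linear functions (one checks $\sum_s c_s = 0$ and $\sum_s c_s\,(\text{index})=0$), so what remains is a short collection of $R_j$-jump terms together with, for $j=n$, a second difference of the cubic $Q_n$. Evaluating these finite polynomial identities is the genuine computation, and it is precisely where care with the factor $\tfrac12$ in the last column and with the special entry $b_{n,n}$ is indispensable.
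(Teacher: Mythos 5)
Your proposal is correct, and its structural claims all check out: $R_j(i)=\tfrac16(i-j)\bigl((i-j)^2-1\bigr)$ is indeed the discrepancy between the cubic and affine branches, the convention $b_{0,j}=Q_j(0)=0$ does make row $2$ generic, the last column is the pure cubic $b_{i,n}=\tfrac12 Q_n(i)$ for every $i$ including $i=n$ (since $\tfrac12 Q_n(n)=\tfrac{1}{12}n(2n^2+1)=b_{n,n}$), and the anomalous stencils $(1,-4,5,-2)$ and $(2,-4,2)$ annihilate affine functions, so for rows $n-1$ and $n$ only $R_j$-jump terms survive, plus differences of $\tfrac12 Q_n$ in column $n$ (e.g.\ $(AB)_{nn}=Q_n(n-2)-2Q_n(n-1)+Q_n(n)=1$, while the $(1,-4,5,-2)$ stencil kills $Q_n$ outright, giving $(AB)_{n-1,n}=0$). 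The paper proves the same identity $AB=I$, but by raw enumeration: for each row type it substitutes the explicit polynomial values of $b_{i\pm1,j}$, $b_{i\pm2,j}$ in the cases $j\le i-2$, $j=i-1$, $j=i+1$, and so on, and for rows $n-1,n$ it merely asserts that ``similar computations complete the proof.'' Your route differs in its key lemma: the piecewise representation $b_{i,j}=P_j(i)-R_j(i)\,\mathbf{1}[i<j]$ together with the fact that the fourth-difference stencil annihilates cubics collapses all interior rows (including row $2$ and column $n$) to evaluating $R_j$ at a few integer offsets, with the diagonal $1$ coming from the single surviving term $-c_{-2}R_k(k-2)=1$, and it localizes the genuine computation in the three anomalous rows. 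What this buys is transparency --- it exhibits $A^{-1}$ as a discrete Green's function, cubic on one side of the diagonal and affine on the other, with the jump producing $\delta_{kj}$ --- fewer opportunities for algebra slips, and portability, since the same annihilation argument transfers to the clamped-clamped matrix of Section~3, whereas the paper's substitutions must be redone case by case; what the paper's version buys is that each case is elementary and checkable in isolation with no structural preliminaries. The one place you must still compute by hand is exactly the one you flagged --- the short list of jump terms for rows $n-1$ and $n$ with $j\in\{n-2,n-1,n\}$ --- and those finite checks do close as you predict, so the proposal is complete in outline and sound.
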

\begin{proof}
The proof is done by the direct computation. 
Let $D$ be matrix such that $D=AB$. We want to show that the product 
\begin{equation*}
d_{i, j} := \left[a_{i,1} \,\, a_{i,2} \,\, \cdots \,\,a_{i,n}\right] \left[ \begin{array}{c} b_{1,j} \\ b_{2,j} \\ \vdots \\ b_{n,j} \end{array} \right] = \begin{cases} 1,& i = j, \\ 0, & i \neq j. \end{cases} 
\end{equation*}
In other words, $D$ is the identity matrix $n \times n$.
\begin{enumerate}
\item[(i)] The case $3 \le i \le n-2$ and $1 \le j \le n$. 

In this case, $a_{i,i-2} = 1$, $a_{i,i-1} = -4$, $a_{i,i} = 6$, $a_{i,i+1} = -4$, $a_{i,i+2} = 1$, while the others are $0$. Therefore, 
\begin{equation}
    d_{i, j} = b_{i-2,j} - 4 b_{i-1,j} + 6 b_{i,j} - 4 b_{i+1,j} + b_{i+2,j}. \label{eq:d_formula}
\end{equation}

If $i = j$, then $b_{i-2,i} = b_{i,i-2} = (2i^3 - 6i^2 + i +6)/6$, $b_{i-1,i} = b_{i,i-1} = (2i^3 - 3i^2 + i)/6$, $b_{ii} = (2i^3 + i)/6$, $b_{i+1,i} = (2i^3 + 3i^2 + i)/6$, and $b_{i+2,i} =(2i^3 + 6i^2 + i)/6$, yielding $d_{i,i} = 1$.

For $i \neq j$, we consider several cases.
\begin{enumerate}
   	\item $j \le i-2$; Then $b_{i-2,j} = (3ij^2 + j  - 6j^2 - j^3)/6$, $b_{i-1,j} = (3ij^2 + j  - 3j^2 - j^3)/6$, $b_{i,j} = (3ij^2 + j - j^3)/6$, $b_{i+1,j} = (3ij^2 + j + 3j^2 - j^3)/6$, $b_{i+2,j} = (3ij^2 + j + 6j^2 - j^3)/6$, yielding $d_{i,j} = 0$.
   	
   	\item $j = i - 1$; Then $b_{i-2,j} = (2i^3 - 9i^2 + 13i - 6)/6$, $b_{i-1,j} = (2i^3 - 6i^2 + 7i - 3)/6$, $b_{i,j} = (2i^3 - 3i^2 + i)/6$, $b_{i+1,j} = (2i^3 - 5i + 3)/6$, $b_{i+2,j} = (2i^3 + 3i^2 - 11i + 6)/6$, yielding $d_{ i,i-1}= 0$;
   	
   	\item $j = i+1$; Then $b_{i-2,j} = b_{j,i-2} = (2i^3 - 3i^2 - 11i + 18)/6$, $b_{i-1,j} = b_{j,i-1} = (2i^3  - 5i + 3)/6$, $b_{i,j} = b_{j,i} = (2i^3  + 3i^2 + i)/6$, $b_{i+1,j} = (2i^3  + 6i^2 + 7i+3)/6$, $b_{i+2,j} = (2i^3  + 9i^2 + 13i+6)/6$, yielding $d_{ i,i+1} = 0$.
   	
   	\item $j \ge i-1$; Then $b_{i-2,j} = b_{j,i-2} = (3j(i-2)^2 + (i-2) - (i-2)^3)/6$, $b_{i-1,j} = b_{j,i-1} = (3j(i-1)^2 + (i-1) - (i-1)^3)/6$, $b_{i,j} = b_{j,i} = \frac{3ji^2 + i - i^3}{6}$, $b_{i+1,j} =  (3j(i+1)^2 + (i+1) - (i+1)^3)/6$,  $b_{i+2,j} = (3j(i+2)^2 + (i+2) - (i+2)^3)/6$, yielding $d_{ i,j}= 0$.
\end{enumerate}
   	   
\item[(ii)] The case $i = 1$. 

		For $j = 1$, $b_{1,1} = 3/6$, $b_{2,1} = 1$, and $b_{3,1}=3/2$; Thus, $d_{i,j}=d_{ 1,1} = 7 b_{1,1} - 4 b_{2,1} + b_{3,1} = 1$. 
		
		For $j > 1$, we have $b_{1,j} = b_{j,1} = j/2$, $b_{2,j} = b_{j,2} = 2j-1$, and $b_{3,j} = b_{j,3} = \frac{9j}{2} - 4$; Thus, $d_{i,j} = 7b_{1,j} - 4 b_{2,j} + b_{3,j} = 0$.
		   	   
   	   \item[(iii)] The case $i = 2$, with ${d_{i,j}} = -4b_{1,j} = 6 b_{2,j} - 4b_{3,j} + b_{4,j}$. 
   	   
   	   For $j = 2$, we have $b_{1,2} = b_{2,1} = 1$, $b_{2,2} = 3$, $b_{3,2} = 5$, $b_{4,2} = 7$; Thus, $d_{2,2} = 1$. 
   	   
   	   For $j \neq 2$, then $b_{1,j} = j/2$, $b_{2,j} = 2j-1$, $b_{3,j} = \frac{9j}{2} - 4$, and $b_{4,j} = 8j-10$. We have $d_{i,j} = 0$.
   	   \item[(iv)] For the case $i \in \{n-1,n\}$, similar computations using (\ref{eq:d_formula}) complete the proof.
\end{enumerate}
\end{proof}
From now on, we shall use $a_{i,j}^{-1}$ to denote the $(i,j)$-entry of $A^{-1}$, the inverse of $A$; thus, $a^{-1}_{i,j} = b_{i,j}$.
 
The following corollary is a consequence of Theorem~\ref{theo01}.
\begin{corollary} \label{cor1}
    The inverse of $A$ is a positive matrix; i.e., $A^{-1} > 0$, implying $a^{-1}_{i,j} > 0$.
\end{corollary}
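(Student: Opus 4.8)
The plan is to use the explicit entries $a^{-1}_{i,j} = b_{i,j}$ supplied by Theorem~\ref{theo01} and verify positivity directly, case by case, following the three-part definition of $B$. Since every entry of $A^{-1}$ is given by a closed-form expression, positivity of the matrix reduces to positivity of a handful of elementary expressions, and no structural or inductive argument is required.

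First I would treat the generic block, namely the entries $b_{i,j}$ with $j \le i$ and $j \in \{1,\dots,n-1\}$, which are given by $b_{i,j} = (3ij^2 + j - j^3)/6$. Factoring out $j$ gives $b_{i,j} = j(3ij + 1 - j^2)/6$, and since $j \ge 1$ it suffices to show $3ij + 1 - j^2 > 0$. Using the hypothesis $j \le i$ one has $3ij \ge 3j^2$, so $3ij + 1 - j^2 \ge 2j^2 + 1 > 0$; hence $b_{i,j} > 0$ throughout this range. By the symmetry relation $b_{i,j} = b_{j,i}$ for $i,j \in \{1,\dots,n-1\}$, the same conclusion transfers to the corresponding upper-triangular entries.

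It then remains to account for the last row and column. The diagonal corner $b_{n,n} = n(2n^2+1)/12$ is manifestly positive for $n \ge 1$. For the off-diagonal entries of the last column, $b_{i,n} = \tfrac12 b_{n,i}$ with $i \le n-1$; here $b_{n,i} = (3ni^2 + i - i^3)/6$ is an instance of the generic formula with row index $n$ and column index $i \le n-1 \le n$, so it is positive by the argument above and therefore $b_{i,n} > 0$. The last row is covered symmetrically (or, again, directly by the generic formula). As every entry falls under one of these cases, $A^{-1} > 0$.

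I do not anticipate a genuine obstacle here: the only point needing a little care is the bookkeeping, that is, making sure the constraint $j \le i$ built into the generic formula is respected whenever it is invoked for the $n$-indexed entries, and that the symmetry relation and the halving relation $b_{i,n} = \tfrac12 b_{n,i}$ are applied over the correct index ranges. The positivity inequality itself is elementary once the factor $j$ is extracted.
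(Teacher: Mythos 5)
Your proposal is correct and takes essentially the same approach as the paper: both verify positivity entry-by-entry from the closed forms in Theorem~\ref{theo01}, using $i \ge j$ to bound the generic entry (the paper writes $\frac{3ij^2+j-j^3}{6} \ge \frac{3j^3+j-j^3}{6} > 0$, which is the same estimate as your factored $j(3ij+1-j^2)/6 \ge j(2j^2+1)/6 > 0$), then note $b_{n,n} = n(2n^2+1)/12 > 0$ and propagate positivity through the symmetry and halving relations. Your explicit bookkeeping for the last row and column is merely a more careful spelling-out of what the paper's proof leaves implicit.
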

\begin{proof}
  By Theorem \ref{theo01} it follows that $a^{-1}_{n,n}=  n(2n^2+1)/12$ is positive. Notice that, for $i \ge j$, $a_{i,j}^{-1} = \displaystyle \frac{3ij^2 + j - j^3}{6} \ge  \frac{3j^3 + j - j^3}{6} > 0$. Consequently, entries determined by the other 2 parts of Theorem \ref{theo01} are also positive.
\end{proof}

The above positivity result is important in the context of the fixed-point iteration we devise to solve the nonlinear system~\eqref{eq5}. Consider the iteration
\begin{eqnarray}
   \boldsymbol{u}^{\ell} = h^4 K A^{-1} \exp(-\boldsymbol{u}^{\ell-1}),  \ell = 1,2. \dots \label{fixed1}
\end{eqnarray}
Since $A^{-1} > 0$ (Corollary~\ref{cor1}), the recipe~\eqref{fixed1} generates a sequence of positive vectors $\{\boldsymbol{u}^{\ell}\}$, if started with   $\boldsymbol{u}^0 >  \boldsymbol{0}$. As the solution of this type of boundary-value problem is a nonnegative function (c.f., Section~\ref{section.intro}; see also later for the finite-difference equation case), if the above iteration converges, it converges to a positive solution.
    
Let $p \in \{1,2,\infty\}$. Our starting point for the convergence analysis is the relation, with $\boldsymbol{u}^0 > \boldsymbol{0}$,
\begin{equation}
\begin{aligned}
     \|\boldsymbol{u}^{\ell} - \boldsymbol{u}^{\ell-1}\|_p &= \|h^4 KA^{-1}(\exp(-\boldsymbol{u}^{\ell-1})-\exp(-\boldsymbol{u}^{\ell-2}))\|_p \notag \\
     &= h^4 K \| A^{-1} \left(\exp(-\boldsymbol{u}^{\ell-2}) + G(\boldsymbol{u}^{\ell-1}-\boldsymbol{u}^{\ell-2}) - \exp(-\boldsymbol{u}^{\ell-2})\right) \|_p \notag \\
     &= h^4 K \| A^{-1}  G(\boldsymbol{u}^{\ell-1}-\boldsymbol{u}^{\ell-2}) \|_p, \notag
\end{aligned}
\end{equation}
where $G = -\text{diag}(\exp (-\xi_1),\dots,\exp(-\xi_n))$, such that the vector $\boldsymbol{\xi} = [\xi_i]_{i=1,n}  \in \mathcal{B} = \{ \boldsymbol{x} \in \mathbb{R}^n : \| \boldsymbol{x} - \boldsymbol{u}^{\ell-2} \|_p < \| \boldsymbol{u}^{\ell-1} - \boldsymbol{u}^{\ell-2} \|_p \}$. Since $\{\boldsymbol{u}^{\ell}\}$ is a sequence of positive vectors, $\boldsymbol{\xi}$ is also a positive vector, and consequently the diagonal entries of $G$ are strictly less than 1. Thus, $\|G\|_p < 1$, and
\begin{eqnarray}
   \|\boldsymbol{u}^{\ell} - \boldsymbol{u}^{\ell-1}\|_p &\le& h^4 K \| A^{-1} \|_p \|G\|_p  (\boldsymbol{u}^{\ell-1}-\boldsymbol{u}^{\ell-2}) \|_p \notag \\ 
   &<& h^4 K \| A^{-1} \|_p \| \boldsymbol{u}^{\ell-1}-\boldsymbol{u}^{\ell-2} \|_p. \label{eq:fixederror}
\end{eqnarray}
We define $L_p$ to be
\begin{equation}\label{eq:defLp}
L_p = h^4 K \|A^{-1}\|_p.
\end{equation}
Convergence guarantee of the fixed point iteration \eqref{fixed1} requires $L_p < 1$, which in turn, for given $K$ and chosen $h$, requires that 
\begin{equation}
\| A^{-1} \|_p < 1/(h^4K).
\end{equation}

\begin{lemma} \label{theo03}
For the inverse of $A$ in Theorem \ref{theo01}, the following holds true: 
\begin{equation*}
a_{i_1,j}^{-1} > a_{i_2,j}^{-1}, \quad \forall i_1 > i_2 > j, \text{ with } i_1,i_2,j \in \{1,2,\dots,n\}.
\end{equation*}
\end{lemma}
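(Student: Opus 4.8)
The plan is to reduce the claim to a one-line computation using the closed form in Theorem~\ref{theo01}, after first pinning down which of the four defining branches for the entries of $B$ actually applies. The key preliminary observation is that the hypothesis $i_1 > i_2 > j$ with all indices in $\{1,2,\dots,n\}$ forces three strictly increasing integers, so necessarily $j \le n-2$; in particular $j \le n-1$, and both $i_1$ and $i_2$ lie in the range $j < i \le n$. This is exactly the regime covered by the first branch of Theorem~\ref{theo01}, namely $b_{i,j} = (3ij^2 + j - j^3)/6$ for $j \le i$, $i \in \{1,\dots,n\}$, $j \in \{1,\dots,n-1\}$. Consequently I never have to touch the special last-column entries $b_{i,n}$ or the corner entry $b_{n,n}$: the only column that would invoke those is $j = n$, which makes the hypothesis vacuous, and $j = n-1$ is vacuous as well (it would require $i_2 = n$ and then $i_1 > n$).

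With both entries written in the same closed form, the second step is simply the subtraction
\begin{equation*}
a_{i_1,j}^{-1} - a_{i_2,j}^{-1} = \frac{3i_1 j^2 + j - j^3}{6} - \frac{3i_2 j^2 + j - j^3}{6} = \frac{(i_1 - i_2)\,j^2}{2}.
\end{equation*}
Since $i_1 > i_2$ and $j \ge 1$, the right-hand side is strictly positive, which is precisely the asserted strict monotonicity as one moves down a fixed column below the diagonal. Equivalently, one may note that, for fixed $j$, the map $i \mapsto b_{i,j}$ is affine in $i$ with positive slope $j^2/2$, so strict monotonicity is immediate.

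The only point requiring care — and what I would treat as the (mild) main obstacle — is the bookkeeping that confirms the first branch genuinely covers every entry in play, so that no piecewise seam in the definition of $B$ is crossed. Concretely, I would verify that for $j \le n-2$ and every $i$ with $j < i \le n$ (including the boundary row $i = n$) the two conditions $j \le i$ and $j \le n-1$ both hold, so that $b_{i,j}$ is indeed given by $(3ij^2+j-j^3)/6$ rather than by the symmetry clause or the last-column clause. Once this range check is recorded, the displayed computation closes the argument; notably, no appeal to the matrix $A$ itself is needed, only the explicit inverse furnished by Theorem~\ref{theo01}.
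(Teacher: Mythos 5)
Your proof is correct and follows essentially the same route as the paper's: both apply the explicit first-branch formula $a^{-1}_{i,j} = (3ij^2 + j - j^3)/6$ from Theorem~\ref{theo01} and conclude by comparing the two entries, the difference being $(i_1-i_2)j^2/2 > 0$. Your added range check that $i_1 > i_2 > j$ forces $j \le n-2$, so the first branch covers every entry in play, is a careful detail the paper leaves implicit but does not change the argument.
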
 
\begin{proof}
From Theorem \ref{theo01} it follows that  $a^{-1}_{i_1,j} = (3i_1 j^2 + j - j^3)/6$ and  $a_{i_2,j}^{-1} = (3i_2 j^2 + j - j^3)/6$,  thus, one can notice that  $a_{i_1,j}^{-1} > a_{i_2,j}^{-1}$, for $i_1 > i_2 > j$.
\end{proof}

\begin{theorem} \label{theo04}
Let $A \in \mathbb{R}^{n \times n}$ be given in~\eqref{eq6}, with $n \ge 5$. Then
\begin{equation*}
	\|A^{-1}\|_p=
	\begin{cases*}
	(n^4 - n^2)/8, \quad \text{if }p=1, \\[0.5em]
	(n^4 + n^2)/8, \quad \text{if }p=\infty.
    \end{cases*}
\end{equation*}
\end{theorem}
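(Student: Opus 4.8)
The plan is to exploit the positivity of $A^{-1}=B$ established in Corollary~\ref{cor1}. Since every entry $b_{i,j}>0$, the induced matrix norms shed their absolute values and reduce to $\|A^{-1}\|_\infty=\max_{1\le i\le n}\sum_{j=1}^n b_{i,j}$ (the largest row sum) and $\|A^{-1}\|_1=\max_{1\le j\le n}\sum_{i=1}^n b_{i,j}$ (the largest column sum). So both cases amount to identifying the extremal row (resp.\ column), and then summing the corresponding entries in closed form using the power-sum identities for $\sum k$, $\sum k^2$, and $\sum k^3$.

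For $p=\infty$ I would first show that the last row dominates every other row entrywise, i.e.\ $b_{n,j}\ge b_{i,j}$ for all $i,j$. For a fixed column $j\le n-1$ and rows $i>j$ this is precisely the monotonicity of Lemma~\ref{theo03}; for rows $i<j$ one invokes the block symmetry $b_{i,j}=b_{j,i}$ and verifies the inequality $b_{n,j}\ge b_{j,i}$ directly from the cubic formula, using $n>j>i$; for the last column $j=n$ one checks that $b_{i,n}=\tfrac12 b_{n,i}$ is increasing in $i$ and attains $b_{n,n}$ at $i=n$. Entrywise domination immediately forces the maximal row sum to occur at $i=n$, and a direct evaluation of $\sum_{j=1}^{n-1}(3nj^2+j-j^3)/6+b_{n,n}$ yields $(n^4+n^2)/8$.

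For $p=1$ the extremal column turns out to be $j=n-1$ rather than $j=n$. The key simplification is the identity $C_j=R_j+\tfrac12 b_{n,j}$ for $j\le n-1$, where $C_j$ and $R_j$ denote the $j$-th column and row sums; it follows from block symmetry together with $b_{j,n}=\tfrac12 b_{n,j}$. Evaluating $R_j$ in closed form and inserting it gives the quartic $24\,C_j=j^4-(4n+2)j^3+(6n^2+6n-1)j^2+(4n+2)j$. I would then establish that $C_j$ is strictly increasing on $\{1,\dots,n-1\}$ by computing the forward difference $24(C_{j+1}-C_j)=4j^3-12nj^2+(12n^2-4)j+6n^2+6n$, whose derivative $12(j-n)^2-4$ is positive for $j\le n-1$, so the difference dominates its value at $j=1$, namely $6n(3n-1)>0$. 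Hence the maximum over $j\le n-1$ is at $j=n-1$, where a short factorization $g(n-1)=3n^2(n+1)$ gives $C_{n-1}=(n^4-n^2)/8$. Finally I would rule out the last column using $C_n=(n^4+n^2)/16+\tfrac12 b_{n,n}$ and checking $C_{n-1}-C_n=\tfrac{n}{48}\bigl(3n^3-4n^2-9n-2\bigr)>0$ for $n\ge5$.

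The main obstacle is the bookkeeping in the $1$-norm case: each column sum must be split carefully into below-diagonal, diagonal, above-diagonal (reflected through symmetry), and last-column contributions, and the resulting quartic assembled without algebra slips, after which the anomalous column $n$ must be treated on its own since the clean monotonicity argument only covers $j\le n-1$. The $\infty$-norm case is comparatively routine once entrywise domination of the last row is in hand; the only delicate point there is justifying domination for the above-diagonal entries $i<j$, which Lemma~\ref{theo03} does not directly address.
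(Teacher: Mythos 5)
Your proposal is correct and follows essentially the same route as the paper's proof: it uses the monotonicity from Lemma~\ref{theo03} together with the symmetry structure of Theorem~\ref{theo01} to place the extremal row at $i=n$ for the $\infty$-norm and to reduce the $1$-norm to comparing columns $n-1$ and $n$, then evaluates the sums by power-sum identities and concludes with the same decisive difference $\tfrac{n}{48}\left(3n^3-4n^2-9n-2\right)>0$ for $n\ge 5$. The only distinction is that you spell out steps the paper leaves implicit (the entrywise domination for $i<j$ and the strict increase $C_1<\cdots<C_{n-1}$ of the column sums via the forward difference of your quartic), which is a welcome refinement rather than a different approach.
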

\begin{proof}
For $p=1$ case, it follows from Lemma \ref{theo03} that
$$
  \| A^{-1} \|_1 = \max_{1 \le j \le n} \sum_{i = 1}^n | a_{i,j}^{-1} | = \max \left\{ \sum_{i=1}^n |a^{-1}_{i,n-1}|, \sum_{i=1}^n |a^{-1}_{i,n}|\right\}.
$$
We have
$$
  \sum_{i = 1}^n |a_{i,n-1}^{-1}| = \sum_{i=1}^n |a_{n-1,i}^{-1}| = \sum_{i=1}^n \frac{3(n-1)i^2 + i - i^3}{6} = \frac{n^4 - n^2}{8}. 
$$
We can now proceed similarly:
$$
  \sum_{i = 1}^n |a_{i,n}^{-1}| = \frac{1}{2}\sum_{i=1}^n  |a_{n,i}^{-1}| = \frac{1}{2} \sum_{i=1}^n \frac{3n i^2 + i - i^3}{6} =\frac{3n^4 + 4n^3 + 3n^2 + 2n}{48}.
$$
From the above results,
$$
  \sum_{i = 1}^n |a_{i,n-1}^{-1}| - \sum_{i = 1}^n |a_{i,n}^{-1}| = \frac{3n^4 - 4n^3 - 9n^2 - 2n}{48} > 0
$$
for $n \ge 5$. Therefore,
$$
   \max_{1 \le j \le n} \sum_{i=1}^n |a_{i,j}^{-1}| \le \sum_{i=1}^n |a_{i,n-1}^{-1}| = \frac{n^4 - n^2}{8} = \| A^{-1} \|_1.
$$

Next for $p=\infty$ using the Lemma \ref{theo03},
\begin{eqnarray}
  \|A^{-1}\|_{\infty} &=& \max_{1 \le i \le n} \sum_{j = 1}^n |a_{i,j}^{-1}| = \sum_{j = 1}^n |a_{n,j}^{-1}| = \sum_{j =1}^{n-1} |a_{n,j}^{-1}| + a_{n,n}^{-1} \notag \\
  &=& \sum_{j = 1}^{n-1} \frac{3nj^2 + j - j^3}{6} + \frac{n(2n^2+1)}{12} \notag \\
  &=& \frac{n^4 + n^2}{8}. \notag
\end{eqnarray}
\end{proof}   





Using H\"older's inequality, 
$$
\|A^{-1}\|_2 \le \sqrt{\|A^{-1}\|_1 \|A^{-1}\|_{\infty}} = \frac{1}{8}\sqrt{n^8 - n^4} \le \frac{1}{8}n^4.
$$

We conclude this  section by the characterization of the finite difference solution of the system~\eqref{eq5}.  Because $A \boldsymbol{u} = h^4 K \exp(-\boldsymbol{u}) > \boldsymbol{0}$, for the last row of the system,
\begin{eqnarray}
      2 u_{n-2} -4 u_{n-1} +2 u_n > 0 \quad \Longrightarrow \quad  u_n - u_{n-1} > u_{n-1} - u_{n-2}. \label{eq7}
\end{eqnarray}
From the $(n-1)$-th row, with $u_{n-3} - 4 u_{n-2} + 5 u_{n-1} - 2 u_n > 0$, we have
\begin{eqnarray}
        u_{n-1}-u_{n-2} > u_{n-2}-u_{n-3} + 2u_{n-2}-4 u_{n-1} + 2u_n > u_{n-2}-u_{n-3},
\end{eqnarray}
after using the inequality~\eqref{eq7}. Furthermore, this row leads to
$$
       4(u_{n-1} - u_{n-2}) > u_n - u_{n-3} + u_n - u_{n-1} > u_n - u_{n-3} + u_{n-1} - u_{n-2},
$$
after again using~\eqref{eq7}, which in turn yields
\begin{eqnarray}
       3 (u_{n-1}-u_{n-2}) > u_n - u_{n-3}.
\end{eqnarray}
    
We then have the following lemma:
    
\begin{lemma} \label{th01}
   For the inequality $A \boldsymbol{u} > \mathbf{0}$, with $A$ given by \eqref{eq6}, the following inequalities hold, with $j = i + 2$  and $i = 3\dots,n-1$ the rows of $A$:
   $$
   u_{j+2} - u_{j+1} > u_{j+1} - u_{j}, \quad 3(u_{j+2} - u_{j+1}) > u_{j+3} - u_j.
   $$
\end{lemma}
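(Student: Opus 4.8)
The plan is to recast both inequalities in terms of the discrete second differences of $\boldsymbol{u}$ and then to propagate them inward from the boundary rows by a backward induction. Set $w_i := u_{i+1} - 2u_i + u_{i-1}$, the centered second difference at node $i$. A direct expansion shows that the interior stencil satisfies $u_{i-2} - 4u_{i-1} + 6u_i - 4u_{i+1} + u_{i+2} = w_{i-1} - 2w_i + w_{i+1}$, so the inequality $A\boldsymbol{u} > \mathbf{0}$ restricted to a standard interior row $i$ (that is, $3 \le i \le n-2$, where $A$ carries the stencil $1,-4,6,-4,1$) is exactly the discrete convexity relation $w_{i-1} - 2w_i + w_{i+1} > 0$. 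I would record this reformulation first, since it is the crux of the argument.

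Next I would translate the two target inequalities into statements about $w$. The first, $u_{j+2} - u_{j+1} > u_{j+1} - u_j$, is simply $w_{j+1} > 0$. Rewriting $u_{j+3}-u_j$ as the sum of the three consecutive first differences and cancelling shows that $3(u_{j+2}-u_{j+1}) > u_{j+3} - u_j$ is equivalent to $w_{j+1} > w_{j+2}$. Hence it suffices to prove that the sequence $\{w_k\}$ is positive and strictly decreasing over the relevant range, i.e.\ $w_k > 0$ and $w_k > w_{k+1}$.

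The base of the induction is supplied by the computations preceding the lemma. The relation~\eqref{eq7} is $w_{n-1} > 0$; the next displayed inequality is $w_{n-2} > 0$; and the inequality $3(u_{n-1}-u_{n-2}) > u_n - u_{n-3}$ is, after the same rewriting, precisely $w_{n-2} > w_{n-1}$. These furnish the pair ``$w_{n-1}>0$ and $w_{n-2} > w_{n-1}$'' that seeds the induction. For the inductive step I would assume $w_{i+1} > 0$ and $w_i > w_{i+1}$ and apply the convexity relation at row $i$ to obtain $w_{i-1} > 2w_i - w_{i+1} = w_i + (w_i - w_{i+1}) > w_i > 0$. This yields $w_{i-1} > w_i > 0$, which is the same pair one index lower, so running $i$ downward from $n-2$ to $3$ establishes $w_k > 0$ and $w_k > w_{k+1}$ throughout, and translating back recovers both claimed inequalities for all the indicated rows.

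I expect the delicate points to be bookkeeping rather than genuine difficulty. One must verify the identity $u_{i-2}-4u_{i-1}+6u_i-4u_{i+1}+u_{i+2} = w_{i-1}-2w_i+w_{i+1}$ and the equivalence of the second inequality with $w_{j+1} > w_{j+2}$, and one must ensure that every row $i$ invoked in the inductive step is a genuine interior row $3 \le i \le n-2$ carrying the unmodified stencil, since the non-standard rows $n-1$ and $n$ do not satisfy the clean convexity relation and instead enter only through the separately derived base cases. Making the index ranges mesh at the boundary, so that the base pair at $k=n-2,n-1$ connects to the first application of the convexity step, is the main thing to watch.
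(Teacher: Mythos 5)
Your proof is correct and complete; I verified the identity $u_{i-2}-4u_{i-1}+6u_i-4u_{i+1}+u_{i+2}=w_{i-1}-2w_i+w_{i+1}$, both translations (the first claimed inequality is $w_{j+1}>0$, the second is $w_{j+1}>w_{j+2}$), the base pair, and the index ranges, and everything meshes. Structurally you follow the same route as the paper: a backward induction seeded by the inequalities derived before the lemma from the last two rows (namely \eqref{eq7}, which is $w_{n-1}>0$, and the two subsequent displays, which give $w_{n-2}>0$ and $w_{n-2}>w_{n-1}$), with an induction hypothesis exactly equivalent to the paper's pair of inequalities under your linear substitution. The genuine difference is the change of variables to the second differences $w_i=u_{i+1}-2u_i+u_{i-1}$: the paper's inductive step is a multi-line regrouping of $u_k-4u_{k+1}+6u_{k+2}-4u_{k+3}+u_{k+4}>0$ into bracketed combinations each controlled by the hypothesis, whereas yours collapses to the one-line $w_{i-1}>2w_i-w_{i+1}=w_i+(w_i-w_{i+1})>w_i>0$. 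What the paper's version buys is that it manipulates the exact quantities appearing in the statement, which Theorem~\ref{theo7} then consumes directly; what yours buys is transparency — it exposes the lemma's content as positivity and strict decrease of the discrete second derivative (discrete convexity propagating inward from the free end) and makes explicit, as you rightly flag, that only the genuine interior rows $3\le i\le n-2$ enter the induction while the modified rows $n-1$ and $n$ contribute only through the base case. One small remark: the lemma's statement contains a typo ($j=i+2$ should read $i=j+2$, as the paper's own proof at the $(k+2)$-th row confirms), and your reading silently adopts the correct convention.
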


\begin{proof}
   We have proved the inequalities for $j = n-3$, which comes from the $(n-1)$-th row of $A \boldsymbol{u} > \mathbf{0}$. Now suppose that they hold also for $j = n-3, n-4, \dots, k+1$. Associated with $j=k$ is the inequality $u_k - 4 u_{k+1} + 6 u_{k+2} - 4 u_{k+3} + u_{k+4} > 0$ from the $(k+2)$-th row of $A \boldsymbol{u} > \mathbf{0}$, which gives
   \begin{eqnarray}
       u_{k+2}-u_{k+1} &>& 3u_{k+1} - u_k - 5 u_{k+2} + 4 u_{k+3} - u_{k+4} \notag \\
       &=& u_{k+1} - u_k + \left[ 4(u_{k+3} - u_{k+2}) + u_{k+1} - u_{k+4} + u_{k+1} - u_{k+2} \right] \notag \\
       &>& u_{k+1} - u_k + \left[ 3(u_{k+3} - u_{k+2}) + u_{k+1} - u_{k+4} \right] \notag \\
       &>& u_{k+1} - u_k \notag
   \end{eqnarray}
   by assumption. Next, note that
   $u_k - 4 u_{k+1} + 6 u_{k+2} - 4 u_{k+3} + u_{k+4} = 3(u_{k+2}-u_{k+1}) +u_k - u_{k+3} - [3(u_{k+3}-u_{k+2}) +u_{k+1}-u_{k+4}] > 0$. Thus, $3(u_{k+2}-u_{k+1}) +u_k - u_{k+3} >  3(u_{k+3}-u_{k+2}) +u_{k+1}-u_{k+4} > 0$, by assumption.
\end{proof}

\begin{theorem} \label{theo7}   
The solution of the finite difference system~\eqref{eq5} is a nonnegative vector $\boldsymbol{u}$, with increasing $u_i$.
\end{theorem}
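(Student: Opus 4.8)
The plan is to prove the stronger statement that $\boldsymbol{u} > \boldsymbol{0}$ and that its entries are \emph{strictly} increasing; this immediately yields the claimed nonnegativity and monotonicity. First I would dispose of positivity almost for free. Since $\boldsymbol{u}$ solves~\eqref{eq5}, we may write $\boldsymbol{u} = A^{-1}\bigl(h^4 K \exp(-\boldsymbol{u})\bigr)$. The right-hand vector is strictly positive entrywise (as $h,K>0$ and the exponential is positive), and by Corollary~\ref{cor1} we have $A^{-1} > 0$. Hence each $u_i = \sum_{j} a^{-1}_{i,j}\,(h^4 K e^{-u_j}) > 0$, so $\boldsymbol{u} > \boldsymbol{0}$; in particular $u_1 > 0$. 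This settles nonnegativity.

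For monotonicity, write $\delta_i := u_{i+1} - u_i$, so that the goal becomes $\delta_i > 0$ for all $i$. The next step is to record that the increments are strictly increasing. Equation~\eqref{eq7} together with the two estimates derived from the $(n-1)$-th row just preceding Lemma~\ref{th01} give $\delta_{n-1} > \delta_{n-2} > \delta_{n-3}$, while Lemma~\ref{th01}, applied with its index $j$ running from $n-3$ down to $1$, propagates this to $\delta_{j+1} > \delta_j$ for every $j = 1,\dots,n-3$. Chaining these yields $\delta_1 < \delta_2 < \cdots < \delta_{n-1}$, so that $\delta_1 = u_2 - u_1$ is the smallest increment. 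Consequently it suffices to establish $\delta_1 > 0$: once this holds, every $\delta_i > 0$ and hence $0 < u_1 < u_2 < \cdots < u_n$.

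The main obstacle — and the only place requiring care — is precisely this base increment $\delta_1$, which lies one step outside the range covered by Lemma~\ref{th01}. I would close the gap by repeating the manipulation in the proof of Lemma~\ref{th01} on the second row of~\eqref{eq5}. With the clamped boundary value $u_0 = 0$, the second row reads $u_0 - 4u_1 + 6u_2 - 4u_3 + u_4 > 0$, and the identical algebra that produced Lemma~\ref{th01} gives
\begin{equation*}
 u_2 - u_1 > (u_1 - u_0) + \bigl[\,3(u_3 - u_2) + u_1 - u_4\,\bigr] + \bigl[(u_3 - u_2)+(u_1 - u_2)\bigr].
\end{equation*}
The first bracket is positive by the second inequality of Lemma~\ref{th01} at $j=1$, namely $3(u_3-u_2) > u_4 - u_1$, and the second bracket equals $u_1 + u_3 - 2u_2 = \delta_2 - \delta_1 > 0$ by the increasing-increment property just established. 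Therefore $u_2 - u_1 > u_1 - u_0 = u_1 > 0$, i.e. $\delta_1 > 0$.

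Combining the three ingredients — the positivity $u_1 > 0$ from the first step, the increasing chain $\delta_1 < \cdots < \delta_{n-1}$, and the base inequality $\delta_1 > 0$ — shows that $\boldsymbol{u}$ is a strictly positive, strictly increasing vector, which completes the proof. I expect the bookkeeping of the increasing-increment chain to be routine once Lemma~\ref{th01} and~\eqref{eq7} are invoked, and the only genuinely new work to be the boundary step for $\delta_1$, where the clamped condition $u_0=0$ and the already-proven positivity of $u_1$ are both essential.
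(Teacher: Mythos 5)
Your proof is correct, and it reaches the conclusion by a genuinely different route at the two boundary steps, while sharing the paper's backbone (the increasing-increment chain from~\eqref{eq7} and Lemma~\ref{th01}). The paper stays entirely inside the difference inequalities $A\boldsymbol{u} > \boldsymbol{0}$: it observes that the first two rows of $A$ are the standard five-point stencil at the nodes $i=1,2$ before elimination of the ghost values, so the chain of Lemma~\ref{th01} extends two further steps to $u_2 - u_1 > u_1 - u_0 > u_0 - u_{-1}$; substituting the boundary relations $u_0 = 0$ and $u_{-1} = u_1$ into the rightmost inequality gives $2u_1 > 0$, which yields positivity and strict monotonicity simultaneously, with no appeal to the explicit inverse. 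You instead obtain positivity directly from Corollary~\ref{cor1} via the identity $\boldsymbol{u} = h^4 K A^{-1}\exp(-\boldsymbol{u}) > \boldsymbol{0}$, and then close the one remaining gap, $\delta_1 = u_2 - u_1 > 0$, by rerunning the algebra of Lemma~\ref{th01} on the second row with $u_0 = 0$; your verification that both bracketed terms are positive (using $3(u_3-u_2) > u_4 - u_1$ and $\delta_2 > \delta_1$ at $j=1$) is exactly the inductive content the paper compresses into the phrase ``of the same structure\dots\ By Lemma~\ref{th01},'' so in that respect your write-up is the more careful one, and it never needs the ghost row involving $u_{-1}$ at all. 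The trade-off is that your positivity step imports the heavier machinery of Theorem~\ref{theo01} (the explicit inverse), whereas the paper's ghost-point argument is self-contained and would apply to any vector satisfying $A\boldsymbol{u} > \boldsymbol{0}$, not only to solutions of~\eqref{eq5}; both arguments are sound, and each buys a small economy the other lacks.
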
 
\begin{proof}
On the nodes $i =0,1$, approximation to the differential term leads to
    $$
       u_{i-2} - 4 u_{i-1} + 6 u_i - 4 u_{i+1} + u_{i+2} > 0,
    $$
which is of the same structure as the $i = 3,\dots,n-2$ rows of $A$. By Lemma~\ref{th01},
    $$
      u_2 - u_1 > u_1 - u_0, \quad u_1 - u_0 > u_0 - u_{-1}.
    $$
Therefore,
    $$
      u_{n} - u_{n-1} > u_{n-1} - u_{n-2} > \cdots > u_2 - u_1 > u_1 - u_0 > u_0 - u_{-1}.
    $$
With $u_0 =0$ (from the boundary condition $\phi(0) = \phi_0 = 0)$ and $u_{-1} = u_1$ (from using central finite differencing on $\phi'(0) = 0$), from the most right inequality, we get $u_1 > 0 = u_0$. Also, $u_2 - u_1 > u_1 - u_0 > 0$; thus $u_2 > u_1$. In general, we have $u_{i+1} > u_i$, $i = 1,\dots,n-1$.
\end{proof}
\section{The case with clamped-clamped boundary conditions} \label{section.CC}
In this section, we consider the case with the boundary conditions~\eqref{eq3}. Conditions at $x = 1$ are treated in the same way as at $x = 0$, leading to \eqref{eq5}, but now with $\boldsymbol{u} = (u_1, \dots, u_{n-1})^T \in \mathbb{R}^{n-1}$ and $A \in \mathbb{R}^{(n-1) \times (n-1)}$ given by
\begin{eqnarray}
A = \left[ \begin{array}{rrrrrrr} 
 7       &   -4     &    1         &      0        &      &   \cdots & 0 \\
-4       &    6     &   -4         &     \ddots & \ddots &   &  \vdots  \\
 1       &   -4     &   \ddots   &    \ddots  &      & \ddots &   \\
 0       & \ddots & \ddots &   &  \ddots   &  \ddots & 0  \\ 
          &   \ddots   &           &                & \ddots   & -4 & 1 \\      
\vdots &          & \ddots &   \ddots  & -4 &  6 & -4 \\
0        & \cdots&            &    0       &   1   & -4 & 7
\end{array}\right]. \label{cc1}
\end{eqnarray}
However, to simplify our notation, we shall consider the case where $\boldsymbol{u} \in \mathbb{R}^n$ and $A \in \mathbb{R}^{n \times n}$ in the subsequent analysis; in this case, $h = 1/(n+1)$.

In constrast to \eqref{eq6}, the matrix \eqref{cc1} is centrosymmetric and near Toeplitz. Furthermore, it admits the rank-2 decomposition as follows:
\begin{eqnarray}
  A = T^2 + UU^t,  \label{cc2}
\end{eqnarray}
where $T = \text{tridiag}_n(-1,2,-1)$ is an $n \times n$ tridiagonal symmetric Toeplitz matrix, and
\begin{eqnarray}
 U = \left[ \begin{array}{cc}
             \sqrt{2} & 0 \\
                0     & 0 \\
                \vdots & \vdots \\
                0     & \sqrt{2}
            \end{array} \right] \in \mathbb{R}^{n \times 2}. \label{cc2a}
\end{eqnarray}
$T$ is a symmetric M-matrix, with positive inverse given explicitly by~(see, e.g., \cite{Diele98L})
\begin{eqnarray}
[T^{-1}]_{ij} = \begin{cases}
\frac{j}{n+1}(n-(i-1)), &i \ge j, \\[0.5em]
\frac{i}{n+1}(n-(j-1)), &i < j.
\end{cases} \label{cc3}
\end{eqnarray}


$A$ is symmetric positive definite because $T^2 = T^TT$ (and $UU^t$) is symmetric positive (semi) definite. The inverse of $A$ can be computed by applying the Sherman-Morrison formula on \eqref{cc2}:
\begin{align}
  A^{-1} &= T^{-2} - T^{-2 }U (I_2 + U^t T^{-2} U)^{-1} U^t T^{-2} \notag\\
         &= T^{-1} (I - T^{-1}U  (I_2 + U^t T^{-2} U)^{-1} U^t T^{-1}  ) T^{-1} \notag   \\
         &= T^{-t} (I - T^{-1}U  (I_2 + U^t T^{-2} U)^{-1}  (T^{-1} U)^t ) T^{-1}.     \label{AinvFac}
\end{align}
Because $A^{-1}$ is symmetric positive definite, the middle term on the right-hand side $I - T^{-1}U  (I_2 + U^T T^{-2} U)(T^{-1}U)^T$ is also symmetric positive definite. Rewriting \eqref{cc2} as
$$
A = T^2 + UU^t = T^t (I + T^{-1}U (T^{-1}U)^t)  T, 
$$
clearly
$$
  (I + T^{-1}U (T^{-1}U)^t)^{-1} = I - T^{-1}U  (I_2 + U^t T^{-2} U)^{-1}  (T^{-1} U)^t =: M.
$$

Note that, with \eqref{cc2a} and \eqref{cc3},
\begin{eqnarray}
T^{-1}U = \frac{\sqrt{2}}{n+1} \left[\begin{array}{cc}
n & 1 \\
n-1 & 2 \\
\vdots & \vdots \\
2   & n-1 \\
1   & n
\end{array} \right]. \label{cc4}
\end{eqnarray}
Direct computation yields
\begin{eqnarray}
I_2 + U^t T^{-2} U &=& \frac{2}{(n+1)^2}  \left[ \begin{array}{ccc} \displaystyle 
\frac{(n+1)^2}{2}  + \sum_{k = 1}^n k^2 & & \displaystyle \sum_{k=1}^n (n-(k-1))k \\ 
& & \\
\displaystyle \sum_{k=1}^n (n-(k-1))k & & \displaystyle \frac{(n+1)^2}{2} + \sum_{k = 1}^n k^2 
\end{array} \right] \notag \\
&=& \frac{1}{\gamma}  \left[ \begin{array}{ccc}
\displaystyle  \gamma + \tau & & \displaystyle  \gamma n - \tau \\
& & \\
\displaystyle  \gamma n - \tau & & \displaystyle  \gamma + \tau
\end{array} \right],  \notag
\end{eqnarray}
where $ \tau = \frac{2n^3 + 3n^2 + n}{6}$ and  $\gamma = \frac{(n+1)^2}{2}$.
Its inverse is given by
\begin{eqnarray}
(I_2 + U^T T^{-2} U)^{-1} = \frac{1}{\delta} \left[ \begin{array}{ccc}
\displaystyle  \gamma + \tau & & \displaystyle  -\gamma n + \tau \\
& & \\
\displaystyle  - \gamma n + \tau & & \displaystyle  \gamma + \tau
\end{array} \right],  \label{cc5}
\end{eqnarray}
where $\det(I_2 + U^T T^{-2} U) = \frac{1}{3}(n^2 + 2n + 3) > 0$ and $\delta = (n+1)(2\tau + \gamma (1-n)) = \frac{1}{6}(n+1)^2(n^2 + 2n + 3)$.

Let $M = [m_{ij}]_{i,j = 1,n}$. Using \eqref{cc4} and \eqref{cc5}, we have, for $i \neq j$,
\begin{eqnarray}
  m_{ij} &=& - \frac{2}{\delta (n+1)^2}[n-(i-1) \quad i] \left[ \begin{array}{ccc}
  \displaystyle  \gamma + \tau & & \displaystyle  -\gamma n + \tau \\
  & & \\
  \displaystyle  - \gamma n + \tau & & \displaystyle  \gamma + \tau
  \end{array} \right]
  \left[ \begin{array}{c}
   n-(j-1) \\
       j
  \end{array} \right] \notag \\
  &=& q_0(n) + q_1(n) (i + j) + q_{2}(n)ij, \notag
\end{eqnarray}
where
\begin{eqnarray}
  q_0(n) &=& - \frac{4n^2 + 8n + 6}{(n+1)(n^2+2n+3)}, \notag \\
  q_1(n) &=& \frac{6}{n^2 + 2n + 3}, \notag \\
  q_{2}(n) &=& - \frac{12}{(n+1)(n^2+ 2n + 3)}. \notag
\end{eqnarray}
One can verify that $m_{ij}$ change signs. Thus $M$ is not an M-matrix.

For $i = j$, 
\begin{eqnarray}
  m_{ii} &=& 1 - \frac{2}{\delta(n+1)^2}[n-(i-1) \quad i] \left[ \begin{array}{ccc}
  \displaystyle  \gamma + \tau & & \displaystyle  -\gamma n + \tau \\
  & & \\
  \displaystyle  - \gamma n + \tau & & \displaystyle  \gamma + \tau
  \end{array} \right]
  \left[ \begin{array}{c}
  n-(i-1) \\
  i
  \end{array} \right] \notag \\
  &=&  \frac{n^3 - n^2 - 3n - 3}{(n+1)(n^2 + 2n + 3)} + \frac{12}{n^2 + 2n + 3} i - \frac{12}{(n+1)(n^2 + 2n + 3)}i^2 \notag \\
  &>& 0, \notag
\end{eqnarray}
for $n \ge 1$.

\begin{theorem} \label{AccPositive}
The inverse of $A$ given by~\eqref{cc1} is a positive matrix. Furthermore, let $\alpha = n+1-i$, $\beta = j \alpha/(6(n+1)(n^2 + 2n + 3))$, and $\varepsilon = 3(1 + \alpha (n+1))(1 + (i-j)j)$. The entries of $A^{-1}$ are
\begin{itemize}
  \item $a^{-1}_{ij} = \beta (\varepsilon + (j^2-1)(2\alpha^2+1))$, \text{for} $i \ge j$
  \item $a^{-1}_{ij} = a^{-1}_{ji}$, \text{otherwise}.
\end{itemize}
\end{theorem}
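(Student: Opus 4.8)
The plan is to finish the Sherman--Morrison computation begun just above the statement. I would rewrite \eqref{AinvFac} as $A^{-1} = T^{-1}MT^{-1} = T^{-2} + T^{-1}\widetilde{M}T^{-1}$, where $\widetilde{M} := M - I$. The key observation is that the off-diagonal and diagonal formulas for $m_{ij}$ already computed coincide once the identity part is stripped off: one checks that $m_{ii} = 1 + \bigl(q_0(n) + 2q_1(n)i + q_2(n)i^2\bigr)$, so that $\widetilde m_{ij} = q_0(n) + q_1(n)(i+j) + q_2(n)ij$ holds uniformly in $i,j$. Hence $\widetilde{M}$ is the rank-two matrix $\widetilde{M} = q_0\,\mathbf{1}\mathbf{1}^t + q_1\,(\mathbf{1}\mathbf{k}^t + \mathbf{k}\mathbf{1}^t) + q_2\,\mathbf{k}\mathbf{k}^t$, with $\mathbf{1} = (1,\dots,1)^t$ and $\mathbf{k} = (1,2,\dots,n)^t$, and therefore
\begin{equation*}
T^{-1}\widetilde{M}T^{-1} = q_0\, pp^t + q_1\,(pr^t + rp^t) + q_2\, rr^t, \qquad p := T^{-1}\mathbf{1},\quad r := T^{-1}\mathbf{k}.
\end{equation*}

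Because $T$ is the Dirichlet discrete second-difference operator, $p$ and $r$ are found by solving $Tp = \mathbf{1}$ and $Tr = \mathbf{k}$ with the convention $p_0 = p_{n+1} = r_0 = r_{n+1} = 0$; a particular-plus-homogeneous ansatz yields the clean closed forms $p_i = i(n+1-i)/2$ and $r_i = i(n+1-i)(n+1+i)/6$. It remains to supply the entries of $T^{-2}$ itself, which I would compute directly from \eqref{cc3} via the convolution $[T^{-2}]_{ij} = \sum_{k=1}^n [T^{-1}]_{ik}[T^{-1}]_{kj}$: assuming $i \ge j$, split the range at $k=j$ and $k=i$, and evaluate the resulting $\sum k$ and $\sum k^2$. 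The outcome is again a symmetric polynomial of degree three in each index. Together with $q_0,q_1,q_2$ and the vectors $p,r$, this reduces $a^{-1}_{ij} = [T^{-2}]_{ij} + [T^{-1}\widetilde M T^{-1}]_{ij}$ to an explicit rational function of $i,j,n$.

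The main obstacle is purely the algebraic simplification of this last sum into the advertised form. I would substitute $\alpha = n+1-i$ throughout, which both exploits the centrosymmetry of $A$ (so the first and second columns of the correction swap under $i \mapsto n+1-i$) and makes the groupings $1+\alpha(n+1)$ and $2\alpha^2+1$ emerge naturally. Since the coefficients $q_0,q_1,q_2$ all carry the denominator $(n+1)(n^2+2n+3)$, matching the denominator in $\beta$, collecting terms over the common denominator $6(n+1)(n^2+2n+3)$ turns the claim into a finite polynomial identity in $i,j,n$ on the region $i \ge j$. Because $a^{-1}_{ij}$ is a polynomial of bounded degree in each variable there, this identity could alternatively be confirmed by verifying $AB = I$ row by row, exactly as in the proof of Theorem~\ref{theo01}, which I would keep as an independent cross-check of the messy bookkeeping; the entries $i<j$ are then handled by the stated symmetry $a^{-1}_{ij} = a^{-1}_{ji}$.

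Finally, positivity becomes immediate once the formula is established and need not rely on the earlier sign analysis of $m_{ij}$. For $i \ge j$ we have $\alpha = n+1-i \ge 1$ and $j \ge 1$, so $\beta > 0$; moreover $1 + \alpha(n+1) > 0$ and $1 + (i-j)j \ge 1$ give $\varepsilon > 0$, while $(j^2-1)(2\alpha^2+1) \ge 0$. Hence $a^{-1}_{ij} = \beta\bigl(\varepsilon + (j^2-1)(2\alpha^2+1)\bigr) > 0$ for $i \ge j$, and the symmetry extends positivity to the whole matrix, recovering the first assertion of the theorem.
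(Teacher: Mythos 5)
Your proposal is correct, and it arrives at the formula through the same starting point as the paper --- the factorization $A^{-1} = T^{-1}MT^{-1}$ from \eqref{AinvFac} together with the explicit entries \eqref{cc3} of $T^{-1}$ and the quadratic-polynomial structure of $m_{ij}$ --- but the execution is organized differently, and more transparently. The paper works column by column: it forms $\mathbf{y}_j = MT_j^{-1}$ entrywise, with the identity part of $M$ folded into the summations, and then evaluates $a^{-1}_{ij} = T_i^{-t}\mathbf{y}_j$; your ingredients appear there only implicitly, namely $T_i^{-t}\mathbf{1} = i(n+1-i)/2$ and $T_i^{-t}\mathbf{k} = i(n+1-i)(n+1+i)/6$ as the coefficients multiplying $r_0$ and $r_1 - j/(n+1)$, and a Green's-function convolution term playing the role of your $[T^{-2}]_{ij}$ (up to a regrouping of the $-j/(n+1)$ piece). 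You instead strip off the identity, $M = I + \widetilde{M}$ --- and your consistency check is right: $1 + q_0 = (n^3 - n^2 - 3n - 3)/\bigl((n+1)(n^2+2n+3)\bigr)$, so $m_{ii} = 1 + \widetilde{m}_{ii}$ indeed holds --- and exploit the fact that $\widetilde{M} = q_0\mathbf{1}\mathbf{1}^t + q_1(\mathbf{1}\mathbf{k}^t + \mathbf{k}\mathbf{1}^t) + q_2\mathbf{k}\mathbf{k}^t$ is a symmetric rank-two matrix, reducing everything to $A^{-1} = T^{-2} + q_0pp^t + q_1(pr^t + rp^t) + q_2rr^t$. Your closed forms for $p$ and $r$ are correct (the second differences of $i^2/2$ and $i^3/6$ are $1$ and $i$, and both vanish at $0$ and $n+1$), and they are exactly the quantities the paper recomputes by direct summation. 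What your version buys: the rank-two structure and the $(i,j)$-symmetry are visible from the outset, each ingredient ($p$, $r$, $T^{-2}$) is independently verifiable, and the terminal step is reduced to one polynomial identity in $i,j,n$ on the region $i \ge j$, with the $AB = I$ verification as a legitimate fallback exactly parallel to the paper's proof of Theorem~\ref{theo01}; what the paper's version buys is that it never needs the entries of $T^{-2}$ as a separate object. Your positivity argument coincides with the paper's ($\beta > 0$; $\varepsilon > 0$ since $1 + (i-j)j \ge 1$ for $i \ge j$; $(j^2-1)(2\alpha^2+1) \ge 0$), stated slightly more cleanly. The only caveat is that, like the paper itself, you leave the final algebraic collection into $\beta(\varepsilon + (j^2-1)(2\alpha^2+1))$ unexecuted; since every piece is pinned down explicitly and you supply a cross-check, this is bookkeeping rather than a gap.
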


\begin{proof}
Let $A^{-1}= [a^{-1}_{ij}]$, with $A^{-1} = T^{-1}MT^{-1}$. Denote by $\mathbf{y}_j = [y_{k,j}]_{k=1,n} = MT_j^{-1}$, the product of $M$ and the $j$-th column of $T^{-1}$. For $i\geq j$,
\begin{eqnarray}
  \mathbf{y}_j = \frac{1}{n+1}
\begin{bmatrix}
(n+1-j)(m_{1,1}+2m_{1,2}+\cdots+jm_{1,j})+j(m_{1,j+1}(n-j)+\cdots+m_{1,n}) \\
\vdots\\
(n+1-j)(m_{j,1}+2m_{j,2}+\cdots+jm_{j,j})+j(m_{j,j+1}(n-j)+\cdots+m_{j,n})\\
\vdots \\
(n+1-j)(m_{n,1}+2m_{n,2}+\cdots+jm_{n,j})+j(m_{n,j+1}(n-j)+\cdots+m_{n,n})
\end{bmatrix}. \notag
\end{eqnarray}
Using $m_{i,j}$, for $i \le j$, we have, with $m^*_{i,j} = q_0 + q_1 (i+j) + q_2 ij$,
\begin{eqnarray}
  y_{i,j} &=& \frac{(n-(j-1))i}{n+1} \notag \\
          &+& \frac{n+1-j}{n+1}(m^{*}_{i,1}+2m^{*}_{i,2}+ \cdots + jm^{*}_{i,j})+\frac{j}{n+1}(m^{*}_{i,j+1}(n-j)+\cdots+m^{*}_{i,n}) \notag \\
          &=& \frac{(n-(j-1))i}{n+1} \notag \\
          &+& \frac{q_{0}+q_{1}i}{n+1}((n+1-j)(1+ \cdots+j)+j(n-j+\cdots+1)) \notag \\
&+& \frac{q_{1}+q_{2}i}{n+1}((n+1-j)(1^{2}+ \cdots+j^{2}) +  j((n-j)(j+1)+(n-j-1)(j+2)+\cdots+n)) \notag \\
          &=& \frac{1}{n+1} \left( (n-(j-1))i + r_0 + r_1 i \right), \notag 
\end{eqnarray}
where
$$
  r_0 = -\frac{j(n+1-j))((n+1)(n+1-j)+1)}{n^{2}+2n+3}
$$
and
$$
 r_1 = \frac{j(n+1-j)(n+1-2j)}{n^2+2n+3}.
$$
Using similar calculation for $i > j$, we get
\begin{equation}
y_{i,j}= \frac{1}{n+1}
\begin{cases} 
  \displaystyle  r_0 + r_1 i + (n+1-j)i = r_{0}+\left(r_{1}- j \right)i + (n+1)i, &  i \leq j, \\
  \displaystyle  r_0 + r_1 i + (n+1-i)j = r_{0}+\left(r_{1}- i \right)i + (n+1)j, &   i > j;
\end{cases} \notag
\end{equation}
hence,
\begin{equation}
\mathbf{y}_j = \frac{1}{n+1}\left( r_{0}
    \begin{bmatrix}
    1 \\ \vdots\\1 \\1 \\ \vdots\\ 1
    \end{bmatrix}
    +\left(r_{1}-j\right)
    \begin{bmatrix}
    1 \\ \vdots \\ j-1\\j\\ \vdots\\ n
    \end{bmatrix}
    + (n+1)
    \begin{bmatrix}
    1 \\ \vdots \\ j-1 \\ j \\ \vdots\\ j
    \end{bmatrix}\right). \notag
\end{equation}
Consider the $i$-th row of $T^{-1}$: 
$$
T_{i}^{-t} = \frac{1}{n+1}\begin{bmatrix}
    n+1-i & 2(n+1-i) & \cdots & i(n+1-i) & i(n-i) & \cdots &i
    \end{bmatrix}.
$$
We have
\begin{eqnarray}
   a^{-1}_{ij} &=& T_i^{-t} M T_j^{-1} = T_i^{-t} \mathbf{y}_j \notag \\
               &=& r_0 \frac{i (n+1-i)}{2} + \left(r_1 - \frac{j}{n+1}\right) \frac{i(n+1-i)(n+1+i)}{6} \notag \\
  &+& \frac{j(n+1-i)(3i(n+1)+1-j^{2})}{6(n+1)}  \notag \\
               &=& \beta (\varepsilon+(j^{2}-1)(2\alpha^{2}+1)), \notag
\end{eqnarray}
where
\begin{eqnarray}
    \alpha &=& \displaystyle {n+1-i}, \notag \\
    \beta &=& \displaystyle \frac{j(n+1-i)}{6(n+1)(n^{2}+2n+3)}, \notag \\
    \varepsilon &=& \displaystyle {3(1+\alpha+n\alpha)(1+ij-j^{2})}.  \notag
\end{eqnarray}
Notice that $\alpha, \beta, \varepsilon > 0$, $\forall i,j = 1,\dots,n$. With $i \ge j$ and $j^2 > j^2 - 1$,
\begin{eqnarray}
a_{ij}^{-1} &>& \beta (j^{2}-1)(2\alpha^{2}+1) \notag \\
            &\ge& 0. \notag
\end{eqnarray}
\end{proof}

By Theorem~\ref{AccPositive}, starting from $\boldsymbol{u}^0 > \boldsymbol{0}$, the fixed-point iteration~\eqref{fixed1} is guaranteed to generate a sequence of positive vectors. 

In the sequel, we present two ways of constructing an estimate for norms of the inverse of $A$. The first approach is based on the factorization $A^{-1} = T^{-t} M^{-1} T^{-1}$ in \eqref{AinvFac}. The result is presented in the next theorem.
\begin{theorem}
	For $p \in \{1,2,\infty\}$,
$$	
	\|A^{-1}\|_p \le (n+1)^4/32.
$$
\end{theorem}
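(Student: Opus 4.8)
The plan is to exploit two facts about $A^{-1}$ recorded in Theorem~\ref{AccPositive}: it is symmetric ($a^{-1}_{ij}=a^{-1}_{ji}$) and entrywise positive. For a symmetric matrix one has $\|A^{-1}\|_2=\rho(A^{-1})\le\|A^{-1}\|_\infty$ and $\|A^{-1}\|_1=\|A^{-1}\|_\infty$, so it suffices to prove the bound for $p=\infty$; and since every entry is positive, $\|A^{-1}\|_\infty=\max_i\sum_j a^{-1}_{ij}=\|A^{-1}\mathbf{1}\|_\infty$, where $\mathbf{1}$ is the all-ones vector. Thus the whole theorem reduces to estimating the single vector $A^{-1}\mathbf{1}$ in the $\infty$-norm.

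First I would feed $\mathbf{1}$ through the factorization \eqref{AinvFac}. Writing $W:=T^{-1}U$ (given explicitly in \eqref{cc4}) and $\mathbf{g}:=T^{-1}\mathbf{1}$, formula \eqref{cc3} gives $g_i=\tfrac12 i(n+1-i)$, so that $\|T^{-1}\|_\infty=\max_i g_i\le (n+1)^2/8$ by AM--GM. Since $A^{-1}=T^{-1}MT^{-1}$ with $M=I-W(I_2+W^tW)^{-1}W^t$ (note $U^tT^{-2}U=W^tW$), we get $A^{-1}\mathbf{1}=T^{-1}(M\mathbf{g})$ and hence $\|A^{-1}\mathbf{1}\|_\infty\le \|T^{-1}\|_\infty\,\|M\mathbf{g}\|_\infty$. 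The target constant $(n+1)^4/32=\tfrac{(n+1)^2}{8}\cdot\tfrac{(n+1)^2}{4}$ now dictates exactly what must be shown, namely $\|M\mathbf{g}\|_\infty\le (n+1)^2/4$.

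The heart of the argument is to evaluate $M\mathbf{g}$ rather than to bound $\|M\|_\infty$ crudely (indeed $\|M\|_\infty$ can exceed $2$, so the three-factor submultiplicative bound $\|T^{-1}\|_\infty^2\|M\|_\infty$ does \emph{not} suffice; one must keep $\mathbf{g}$ attached). Three structural observations collapse the rank-$2$ correction to a constant: (a) $\mathbf{g}$ is palindromic, $g_i=g_{n+1-i}$, so $W^t\mathbf{g}=c\binom{1}{1}$ for the scalar $c=\tfrac{\sqrt2}{n+1}\sum_i (n+1-i)g_i$; (b) from the explicit $I_2+U^tT^{-2}U$ computed just before Theorem~\ref{AccPositive}, the vector $\binom{1}{1}$ is an eigenvector of $W^tW$ with eigenvalue $n$, whence $(I_2+W^tW)^{-1}\binom{1}{1}=\tfrac{1}{n+1}\binom{1}{1}$; and (c) $W\binom{1}{1}=\sqrt2\,\mathbf{1}$ directly from \eqref{cc4}. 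Combining (a)--(c), $M\mathbf{g}=\mathbf{g}-\tfrac{\sqrt2\,c}{n+1}\mathbf{1}=\mathbf{g}-\tfrac{S_1}{(n+1)^2}\mathbf{1}$ with $S_1:=\sum_{i=1}^n i(n+1-i)^2$; that is, $M\mathbf{g}$ is merely a constant shift of $\mathbf{g}$.

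Finally I would bound the shifted vector entrywise. Each entry is $g_i-\tfrac{S_1}{(n+1)^2}$ with $0<g_i\le (n+1)^2/8$, while the termwise estimate $i(n+1-i)^2=[i(n+1-i)](n+1-i)\le \tfrac{(n+1)^2}{4}(n+1)$ gives $S_1<(n+1)^4/4$, hence $0<\tfrac{S_1}{(n+1)^2}<(n+1)^2/4$. Therefore $-(n+1)^2/4<g_i-\tfrac{S_1}{(n+1)^2}\le (n+1)^2/8$, so $\|M\mathbf{g}\|_\infty\le (n+1)^2/4$, and $\|A^{-1}\|_\infty\le \tfrac{(n+1)^2}{8}\cdot\tfrac{(n+1)^2}{4}=(n+1)^4/32$; the symmetry reduction of the first paragraph then yields the same bound for $p\in\{1,2\}$. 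I expect the main obstacle to be recognizing observations~(a)--(b): that the palindromic symmetry of $\mathbf{g}$ singles out precisely the eigenvector of $W^tW$ that turns the Woodbury correction into a constant vector. Everything after that is the routine shift estimate.
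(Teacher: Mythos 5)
Your proof is correct, and it takes a genuinely different route from the paper's --- in fact your parenthetical warning about the three-factor bound identifies a real flaw in the paper's own argument. The paper bounds $\|A^{-1}\|_p \le \|T^{-1}\|_p^2\,\|M\|_p$ and claims $\|M\|_1 \le 2$ via the estimate $\max_i \tfrac12\bigl(|\widetilde{m}_{i,0}|+|\widetilde{m}_{i,n+1}|\bigr)(n+1) = (n^2+5)/(n^2+2n+3)$. That maximization is incorrect: the maximum over $i\in\{1,\dots,(n+1)/2\}$ is attained at $i=1$, where the quantity equals roughly $3(n^2-1)/(n^2+2n+3)\approx 3$, not at the centre row. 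More to the point, the actual first row sum of $M$ exceeds $2$ already at moderate sizes (for $n=100$ one computes $\sum_j |m_{1j}|\approx 2.53$, and since $\widetilde{m}_{1j}\approx -4/n + 6j/n^2$ the row sum tends to $8/3$ as $n\to\infty$). Consequently the paper's route, even with a corrected estimate $\|M\|_1\le 8/3$, only yields a bound of order $(n+1)^4/24$ and cannot deliver the stated constant $(n+1)^4/32$. The theorem itself is true (it is anyway superseded by the sharper Theorem~\ref{theo33}), but as written the paper's proof does not establish it --- yours does.

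Your mechanism for avoiding the submultiplicative loss is sound and checks out against the paper's data: symmetry and entrywise positivity (Theorem~\ref{AccPositive}) reduce all three norms to $\|A^{-1}\mathbf{1}\|_\infty$; the palindromy $g_i=g_{n+1-i}$ of $\mathbf{g}=T^{-1}\mathbf{1}$, $g_i=\tfrac12 i(n+1-i)$, forces $W^t\mathbf{g}$ to be proportional to $(1,1)^t$; the row sums of the $2\times 2$ matrix $I_2+U^tT^{-2}U$ displayed before \eqref{cc5} equal $\tfrac{1}{\gamma}(\gamma+\tau+\gamma n-\tau)=n+1$, confirming your eigenvalue claim; and $W(1,1)^t=\sqrt{2}\,\mathbf{1}$ follows from \eqref{cc4}. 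Hence
\begin{equation*}
M\mathbf{g} \;=\; \mathbf{g} - \frac{S_1}{(n+1)^2}\,\mathbf{1}, \qquad S_1=\sum_{i=1}^n i(n+1-i)^2 = \frac{n(n+1)^2(n+2)}{12},
\end{equation*}
and your crude bound $S_1<(n+1)^4/4$ gives $\|M\mathbf{g}\|_\infty\le (n+1)^2/4$ and the target constant. Note that the exact value $S_1/(n+1)^2 = n(n+2)/12 < (n+1)^2/8$ would even sharpen the conclusion to $\|A^{-1}\|_p\le (n+1)^4/64$; your deliberately loose estimate is fine, but you are leaving a factor of $2$ on the table for free.
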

\begin{proof}
	
	$$	
	\|A^{-1}\|_p \le \|T^{-1}\|_p \|M\|_p \|T^{-1}\|_p = \|T^{-1}\|_p^2 \|M \|_p.
	$$
Note that $\|T^{-1}\|_1 = \|T^{-1}\|_{\infty}$, due to symmetry. Thus, we shall consider only $\|T^{-1}\|_1$. Using \eqref{cc3},
\begin{eqnarray}
  \sum_{j = 1}^n |T^{-1}_{ij}| &=& \frac{1}{n+1} \left[ (n-(i-1)) \sum_{j=1}^{i-1} j + i \sum_{j=1}^{n-(i-1)}j
  \right] \notag \\
     &=& \frac{1}{2(n+1)} \left[ (n+1)^2 i - (n+1)i^2 \right]. \notag
\end{eqnarray}
The maximum of the rowsum is then attained for $i = (n+1)/2$. Thus,
\begin{eqnarray}
\|T^{-1}\|_1 = \max_{1 \le i \le n} \sum_{j=1}^n |T^{-1}_{ij}| \le \frac{(n+1)^2}{8},
\end{eqnarray}
with equality holding when $n$ is odd.

We now estimate the 1-norm of $M$. Let $\widetilde{m}_{ij} = q_0(n) + q_1(n)(i+j) + q_2(n) ij$, $\forall i,j = 1,\dots,n$ and consider $\sum_{j = 1}^n |\widetilde{m}_{ij}|$. For a fixed $i$, $\widetilde{m}_{ij}$ can be viewed as a linear function of $j$. $\sum_{j = 1}^n |\widetilde{m}_{ij}|$ can then be viewed as the rectangular rules that approximate the area made by the function $\widetilde{m}_{ij}$ and the $j$-axis. In this case, treating $j \in [0,n+1] \subset \mathbb{R}$,
$$
\sum_{j = 1}^n |\widetilde{m}_{ij}| \le \int_{j=0}^{n+1} |\widetilde{m}_{ij}|dj = \frac{1}{2} (|\widetilde{m}_{i,0}| + |\widetilde{m}_{i,n+1}| )(n+1),
$$
where $\widetilde{m}_{i,0} = - (4n^2 + 6n(1-i) + 8-6i)/[(n+1)(n^2 + 2n + 3)]$ and $\widetilde{m}_{i,n+1} =  (2n^2 + 6n -2 - 6i(n+1))/[(n+1)(n^2 + 2n + 3)]$.

Since the matrix $\widetilde{M} = [\widetilde{m}_{ij}]$ is persymmetric, we just need to consider $i = 1,\dots,(n+1)/2$. Then,
\begin{eqnarray}
\sum_{j = 1}^n |\widetilde{m}_{ij}| &\le& \max_i \frac{1}{2} (|\widetilde{m}_{i,0}| + |\widetilde{m}_{i,n+1}| )(n+1) = \frac{1}{2} \frac{2(n^2 + 5)}{(n+1)(n^2+2n+3)}(n+1) \notag \\
&=& \frac{n^2 + 5}{n^2+2n+3}. \notag
\end{eqnarray}
Now,
\begin{eqnarray}
  \sum_{j=1}^n |m_{ij}| &=& \sum_{j = 1,j\neq 1}^n |m_{ij}| + |m_{ii}| = \sum_{j = 1,j\neq 1}^n |\widetilde{m}_{ij}| + |1 + \widetilde{m}_{ii}|  \notag \\
  &\le& 1 + |\widetilde{m}_{ii}| + \sum_{j = 1,j \neq i}^n |\widetilde{m}_{ij}| \notag \\
  &=& 1 + \sum_{j = 1}^n |\widetilde{m}_{ij}|. \notag
\end{eqnarray}
Thus, for $n \ge 1$,
\begin{eqnarray}
\|M\|_1 = \max_i \sum_{j=1}^n |m_{ij}| &\le&  1 + \max_i \sum_{j = 1}^n |\widetilde{m}_{ij}|  \notag \\
&\le& 1 + \frac{n^2 + 5}{n^2+2n+3} \notag \\
&\le& 2, \notag
\end{eqnarray}
since $n^2 + 5 < n^2 + 2n + 3$ for $n \ge 1$.

Combining with $\|T^{-1}\|_1$, we get the desired result. Furthermore, using  H\"older's inequality, $\|A^{-1}\|_2 \le \sqrt{\|A^{-1}\|_1 \|A^{-1}\|_{\infty}} \le (n+1)^4/32$.
\end{proof}

The second approach uses the knowledge of the entries of $A^{-1}$ in Theorem~\ref{AccPositive}. Tedious calculation results in exact norms in some cases, and hence much stronger estimates than the previous estimates. 

\begin{theorem} \label{theo33}
For $p \in \{1,2,\infty\}$,
$$
  \|A^{-1}\|_p  \le  (n+1)^2 \left( (n+1)^2 + 8\right)/384. 
$$
If $n$ is odd, then the equality holds for $p \in \{1,\infty\}$.
\end{theorem}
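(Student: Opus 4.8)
The plan is to exploit that $A$ is symmetric (indeed symmetric positive definite, as noted after~\eqref{cc3}) together with $A^{-1}>0$ from Theorem~\ref{AccPositive}. For a symmetric matrix the $1$- and $\infty$-operator norms coincide, and positivity of the entries removes the absolute values, so
$$
\|A^{-1}\|_1=\|A^{-1}\|_\infty=\max_{1\le i\le n}\sum_{j=1}^n a^{-1}_{ij}=\max_{1\le i\le n}s_i,
$$
where $s_i$ is the $i$-th row sum and $\mathbf{s}=(s_1,\dots,s_n)^t=A^{-1}\mathbf{1}$, with $\mathbf{1}$ the all-ones vector. The whole problem therefore reduces to understanding the single vector $\mathbf{s}$ and locating its largest entry.

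First I would compute $\mathbf{s}$ in closed form. Rather than summing the explicit entries of Theorem~\ref{AccPositive} directly, I would solve the equivalent system $A\mathbf{s}=\mathbf{1}$. Extending $\mathbf{s}$ by the ghost values $s_0=s_{n+1}=0$ and $s_{-1}=s_1$, $s_{n+2}=s_n$ (precisely the relations that produced the corner entries $7$ and the clamped end treatment in $A$), the system becomes the constant fourth-difference equation $s_{i-2}-4s_{i-1}+6s_i-4s_{i+1}+s_{i+2}=1$ for $i=1,\dots,n$. Since $i^4/24$ is a particular solution, I would look for a global quartic; the centrosymmetry of $A$ forces $\mathbf{s}$ to be symmetric about the midpoint, so writing $t=i-(n+1)/2$ the solution is even in $t$,
$$
s_i=\tfrac{1}{24}t^4+b\,t^2+e .
$$
The two side conditions $s_0=0$ and $s_{-1}=s_1$ (their mirror images at the right end following from evenness) then pin down $b=-(n^2+2n+5)/48$ and, using $n^2+2n+5=(n+1)^2+4$, the central value $e=(n+1)^2\bigl((n+1)^2+8\bigr)/384$, which is exactly the claimed bound.

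Next I would show the maximum of $s_i$ over the grid is attained at the centre $t=0$. Differentiating $f(t)=t^4/24+bt^2+e$ gives critical points at $t=0$ and $t=\pm\sqrt{-12b}$; since $b<0$, $t=0$ is a local maximum. The decisive estimate is that $-12b=(n^2+2n+5)/4$ exceeds the squared endpoint $\bigl((n-1)/2\bigr)^2$ by exactly $n+1>0$, so the two off-centre minima lie strictly outside the grid range $|t|\le(n-1)/2$. Hence $f$ is strictly decreasing from $t=0$ out to the endpoints, giving $\max_i s_i\le f(0)=e$, with equality precisely when $t=0$ is a grid point, i.e. when $n$ is odd (for $n$ even the maximiser is a half-integer and the discrete maximum is strictly smaller). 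This yields $\|A^{-1}\|_1=\|A^{-1}\|_\infty=(n+1)^2\bigl((n+1)^2+8\bigr)/384$ for odd $n$ and the stated inequality otherwise, while the case $p=2$ comes for free by interpolation: $\|A^{-1}\|_2\le\sqrt{\|A^{-1}\|_1\|A^{-1}\|_\infty}=\|A^{-1}\|_1$, with no equality claimed.

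The main obstacle is the bookkeeping in the second step: correctly translating the clamped/ghost boundary treatment into the four side conditions on the quartic, and confirming that the resulting polynomial genuinely satisfies the modified corner rows of $A$ (which I would verify by reducing each corner row to $\delta^4 s_i=1$ plus the ghost relations). Together with the sharp range estimate of the third step, which certifies that the centre is the true discrete maximiser, these are the only nonroutine points; everything else is elementary algebra. If one prefers to argue strictly from the entries in Theorem~\ref{AccPositive}, the same $s_i$ can be recovered by splitting $\sum_{j\le i}$ and $\sum_{j>i}$ and evaluating the resulting power sums, at the cost of considerably more computation.
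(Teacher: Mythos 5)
Your proof is correct, and it reaches the bound by a genuinely different route from the paper's. The paper works entirely from the explicit entries of $A^{-1}$ in Theorem~\ref{AccPositive}: it splits each row sum into $\sum_{j\le i}a^{-1}_{i,j}$ and $\sum_{j>i}a^{-1}_{i,j}$ via centrosymmetry, evaluates the resulting power sums (the coefficients $C^i_1,C^i_2,C^i_3$ and $C^k_1,C^k_2,C^k_3$), then differentiates the row sum in a continuous variable $i$ and identifies $i=(n+1)/2$ as the only admissible root of the resulting cubic, rejecting $i=-\frac{1}{2}\bigl(\sqrt{n^2+2n+5}-(n+1)\bigr)$ and $i=\frac{1}{2}\bigl(\sqrt{n^2+2n+5}+(n+1)\bigr)$ --- which, in your shifted coordinate $t=i-(n+1)/2$, are exactly your off-centre critical points $t=\pm\sqrt{-12b}$, so the two arguments locate the same extrema. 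You instead bypass the explicit inverse entirely: characterizing the row-sum vector by $A\mathbf{s}=\mathbf{1}$, translating the corner rows of~\eqref{cc1} into the ghost conditions $s_0=s_{n+1}=0$, $s_{-1}=s_1$, $s_{n+2}=s_n$, and solving the constant fourth-difference equation with an even quartic ansatz justified by centrosymmetry. Your constants check out ($b=-(n^2+2n+5)/48$ from $f(m+1)=f(m-1)$ with $m=(n+1)/2$, and $e=m^2(m^2+2)/24=(n+1)^2\bigl((n+1)^2+8\bigr)/384$ from $f(m)=0$), and your range estimate $-12b-\bigl(\tfrac{n-1}{2}\bigr)^2=n+1>0$ certifies the discrete maximiser more transparently than the paper's continuous-variable argument, handling the even-$n$ case where the paper appeals to the nearest integer rows. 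What you still need from the paper is only positivity of $A^{-1}$ (Theorem~\ref{AccPositive}, to drop absolute values), symmetry (so $\|A^{-1}\|_1=\|A^{-1}\|_\infty$), and the same H\"older step for $p=2$; in exchange you eliminate all the entrywise bookkeeping, at the modest cost of verifying that the modified corner rows of $A$ really encode $\delta^4 s_i=1$ under the ghost substitutions, which you correctly flag and which is routine to confirm.
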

\begin{proof}
We shall  first  consider the case $p = \infty$. In this case, by using $a_{i,j}^{-1} > 0$,
\begin{eqnarray}
   \| A^{-1} \|_{\infty} &=& \max_i \sum_{j=1}^n |a^{-1}_{i,j}| = \max_i \sum_{j=1}^n a^{-1}_{i,j} \notag 
\end{eqnarray}
For $i = 1,\dots, n$,
\begin{eqnarray}
   \sum_{j=1}^n a_{i,j}^{-1} = \sum_{j=1}^i a^{-1}_{i,j} + \sum_{j=i+1}^n a^{-1}_{i,j} = \sum_{j=1}^i a^{-1}_{i,j} + \sum_{k = 1}^{n-i} a^{-1}_{k,i}, \notag
\end{eqnarray}
because of the centrosymmetry of $A^{-1}$. Calculating each sum using the formula for the entries $a^{-1}_{ij}$, we get
\begin{eqnarray}
   \sum_{j=1}^i a^{-1}_{i,j} &=&\widehat{\delta}^{-1}  \left[ C^i_1 \sum_{j=1}^i j + C^i_2 \sum_{j=1}^i j^2 + C^i_3 \sum_{j=1}^i j^3 \right] \notag \\
    &=& \widehat{\delta}^{-1} \left[ C^i_1 \frac{i^2 + i}{2} + C^i_2 \frac{2i^3 + 3i^2 + i}{6}+ C^i_3 \frac{i^4 + 2i^3 + i^2}{4} \right], \notag
\end{eqnarray}
where $\widehat{\delta} = 6(n+1)(n^2 + 2n + 3)$ and
\begin{eqnarray}
   C^i_1 &=& n^3 + 3n^2 - 3i^2 n + 5n + 2i^3 - 3i^2 - 2i + 3, \notag \\
   C^i_2 &=& 3in^3 - 6i^2 n^2 + 9in^2 + 3i^3n -12i^2n + 12in + 3i^3 - 9i^2 + 6i, \notag \\
   C^i_3 &=& -n^3 - 3n^2 + 3i^2n - 5n - 2i^3 + 3i^2 + 2i - 3. \notag
\end{eqnarray}
Also,
\begin{eqnarray}
   \sum_{k=1}^{n-i} a^{-1}_{k,i} &=& \widehat{\delta}^{-1} \left[ C^k_1 \sum_{k=1}^{n-i} k + C^k_2 \sum_{k=1}^{n-i} k^2 + C^k_3 \sum_{k=1}^{n-i} k^3 \right] \notag \\
    &=& \widehat{\delta}^{-1} \left[  C^k_1 \frac{(n-i)^2 + (n-i)}{2} + C^k_2 \frac{2(n-i)^3 + 3(n-i)^2 +(n-i)}{6} \right] \notag \\
    &+& \widehat{\delta}^{-1} C^k_3 \frac{(n-i)^4 + 2(n-i)^3 +(n-i)^2}{4}, \notag
\end{eqnarray}
where
\begin{eqnarray}
   C^k_1 &=& 3i^2n - 2i^3 + 3i^2 + 2i, \notag \\
   C^k_2 &=& 3i^2n^2 - 3i^3n + 6i^2 n + 3in - 3i^3 + 3i, \notag \\
   C^k_3 &=& -3i^2 n + 2i^3 - 3i^2 - 2i. \notag
\end{eqnarray}

Assuming that $i \in [1,n] \subset \mathbb{R}$, the maximum of the rowsum is obtained from the condition $\displaystyle \frac{d}{di}\sum_{j=1}^n a_{i,j}^{-1} = 0$.  In this regard, we have
\begin{eqnarray}
   \frac{d}{di}\sum_{j=1}^n a_{i,j}^{-1} = \widehat{\delta}^{-1} \left[ C'_0 + C'_1 i + C'_2 i^2 + C'_3 i^3 \right] = 0, \notag
\end{eqnarray}
where
\begin{eqnarray}
    C'_0 &=& \frac{1}{2}n^4 + 2n^3 + 4n^2 + 4n + \frac{3}{2}, \notag \\
    C'_1 &=& \frac{1}{2}n^5 + \frac{5}{2}n^4 + 5n^3 + 5n^2 + \frac{1}{2}n - \frac{3}{2}, \notag \\
    C'_2 &=& -\frac{3}{2}n^4 - 6n^3 - 12n^2 - 12n - \frac{9}{2}, \notag \\
    C'_3 &=& n^3 + 3n^2 + 5n + 3. \notag
\end{eqnarray}
The only acceptable solution of the above equation is $i = (n+1)/2$. The other solutions are rejected: $i = - \frac{1}{2}(\sqrt{n^2 + 2n + 5} - (n+1)) < 0$ and $i = \frac{1}{2} (\sqrt{n^2 + 2n + 5} + (n+1)) > n+1 > n$. One can verify that $i = (n+1)/2$ maximizes the rowsum.

Let $n$ be odd. With $i = (n+1)/2$,
\begin{equation}
\begin{aligned}
   \| A^{-1} \|_{\infty} &=\displaystyle \max_i \sum_{j=1}^n |a^{-1}_{i,j}| = \sum_{j=1}^n a^{-1}_{(n+1)/2,j} = \sum_{j=1}^{\frac{n+1}{2}} a^{-1}_{(n+1)/2,j} + \sum_{k = 1}^{\frac{n-1}{2}} a^{-1}_{k,(n+1)/2} \notag  \\
  &=  \left( n^4 + 4n^3 + 14n^2 + 20n + 9\right)/384 \notag \\
 &=  (n+1)^2 ( (n+1)^2 + 8)/384.
\end{aligned}
\end{equation}
If $n$ is even, then $i = (n+1)/2$ is not a row of the matrix $A$; the maximum of the rowsum will then be attained at $i = \ceil{(n+1)/2}$ or $i = \floor{(n+1)/2}$. Either case satisfies
\begin{eqnarray}
  \| A^{-1} \|_{\infty} \le(n+1)^2 ( (n+1)^2 + 8)/384. \notag
\end{eqnarray}
Symmetry of $A^{-1}$ leads to $\| A^{-1}\|_1 = \|A^{-1}\|_{\infty}$. Using H\"older's inequality, the above inequality holds also for $p = 2$.
\end{proof}
Table~\ref{table5} shows the computed norms of the inverse and compares them with the estimate given by Theorem~\ref{theo33}. For odd $n$ and $p \in \{1,\infty\}$ the norms are exact. For even $n$, Theorem~\ref{theo33} gives  an estimate that leads to a small gap. This gap relative to the estimate becomes negligible with an increase in $n$. To support this statement, the reader is referred to Fig. \ref{fig:norm_p_vs_Upper_Bound} and Fig. \ref{fig:relative_error_p} in log scales. The numerical tests are performed for all even $n$ from $10$ to $1000$. The relative error is computed as $|\|A^{-1}\|_p-UBound|/\|A^{-1}\|_p$, where $UBound=(n+1)^2 \left( (n+1)^2 + 8\right)/384$ from Theorem~\ref{theo33}. As shown in Fig. \ref{fig:relative_error_p} (left), the relative error decreases as $n$ increases for $p=1$ or $p=\infty$. On the other hand, according to the numerical observation the difference between $\| A\|_2$ and the upper bound become constant  relative to the norm as $n$ increases, see Fig. \ref{fig:relative_error_p} (right).
\begin{table}[!h]
	\caption{Computed $\|A^{-1}\|_p$ and the estimates, for the clamped-clamped case.}  \label{table5}
	\begin{center}
		\begin{tabular}{c|ccc|c} \hline
			$n$ & \multicolumn{3}{c|}{$p=$}& Upper bound from\\ \cline{2-4}
			    & 1 & 2 & $\infty$ &  Theorem~\ref{theo33} \\ \hline
                   49  & 16,328 &  12,527 & 16,328 & 16,328 \\
			50  & 17,658 & 13,558 & 17,658 & 17,672 \\ \hline
			99  & 260,625 & 199,939 & 260,625 & 260,625 \\ 
                  100 & 271,150 & 208,055  & 271,150 & 271,203 \\ \hline
                  150 & 1,354,225 & 1,038,976 & 1,354,225 & 1,354,343 \\ \hline
		\end{tabular}
    \end{center}
\end{table}
\begin{figure}[H]
	\centering
	\includegraphics[width=0.4\linewidth]{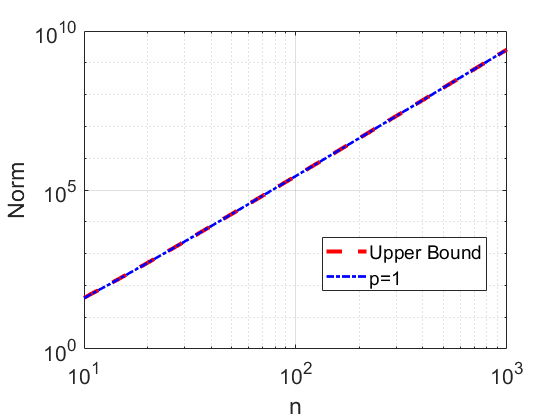}
	\includegraphics[width=0.4\linewidth]{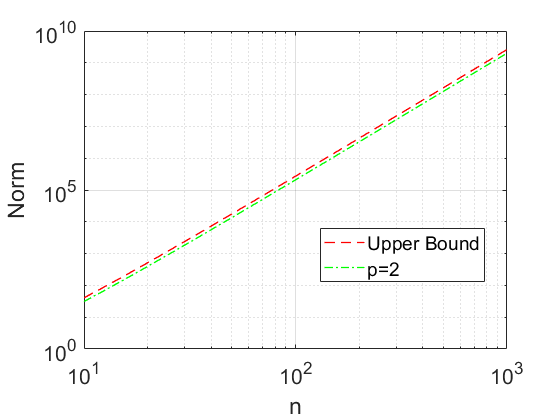}
	\caption{The upper bound and actual norm $p=1$ or $p=\infty$(left) and $p=2$(right) in log scale}
	\label{fig:norm_p_vs_Upper_Bound}
\end{figure}
\begin{figure}[H]
	\centering
	\begin{subfigure}[b]{0.4\textwidth}
    \centering
	\includegraphics[width=0.9\linewidth]{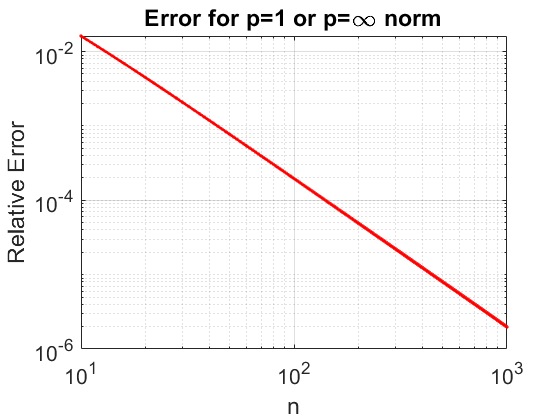}
	\caption{$p=1$ or $p=\infty$(left)}
	\end{subfigure}
	\hfill
	\begin{subfigure}[b]{0.4\textwidth}
    \centering
	\includegraphics[width=0.9\linewidth]{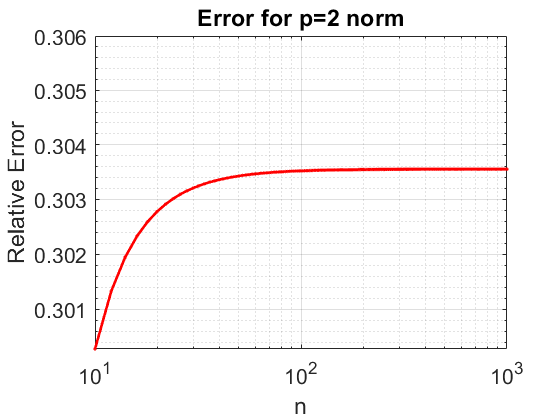}
	 \caption{$p=2$(right)}
     \end{subfigure}
	\caption{The relative errors in log scale}
	\label{fig:relative_error_p}
\end{figure}



For $n \ge 5$, the factor $(1 + 8/(n+1)^2)/384 \le 11/3474$. So, alternatively, if $K$ satisfies the condition in the above theorem, we can have a simpler bound: $L_p < 11/3474$. This factor approaches $1/384$ from above as $n \to \infty$. Since the latter is slightly less than the former, for a fixed $K$, one can expect a slight improvement of convergence by increasing $n$. 

\section{Numerical Results}  \label{section.num}

In this section, we present numerical results from solving~\eqref{eq1} with~\eqref{eq2} or~\eqref{eq3} using the fixed point method\eqref{fixed1}. We compare the observed convergence with the theoretical bound given by~\eqref{eq:defLp} and Theorem~\ref{theo04} (for the clamped-free case) or Theorem~\ref{theo33} (for the clamped-clamped case).The fixed point method \eqref{fixed1} is declared to have reached a convergence if $\|\boldsymbol{u}^{\ell+1} - \boldsymbol{u}^{\ell}\|_p < 10^{-6}$, where $p \in \{1,2,\infty\}$. Solutions at convergence are shown in Figure~\ref{fig1} for the clamped-free and clamped-clamped case, with $K = 1$.


\begin{figure}[H]
\includegraphics[width=0.5\textwidth]{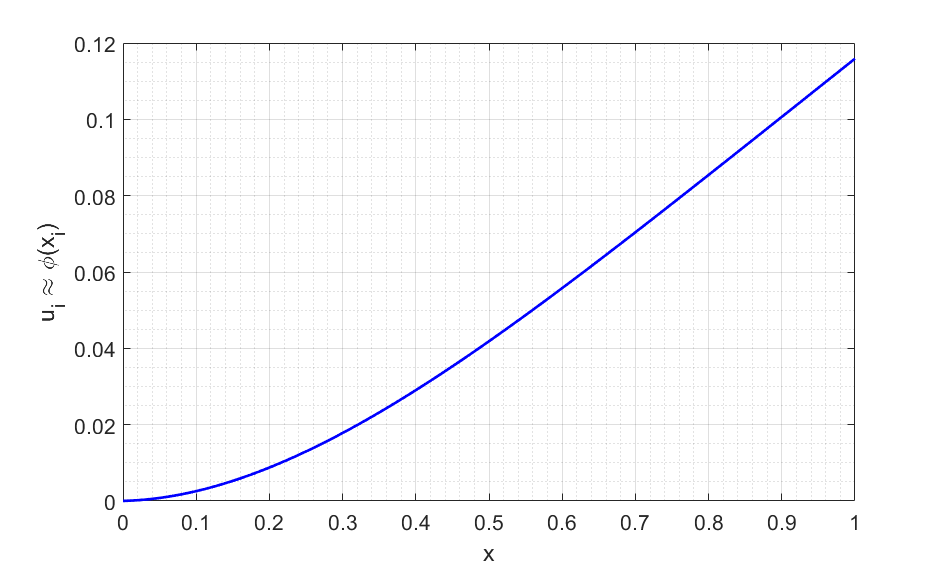} \includegraphics[width=0.5\textwidth]{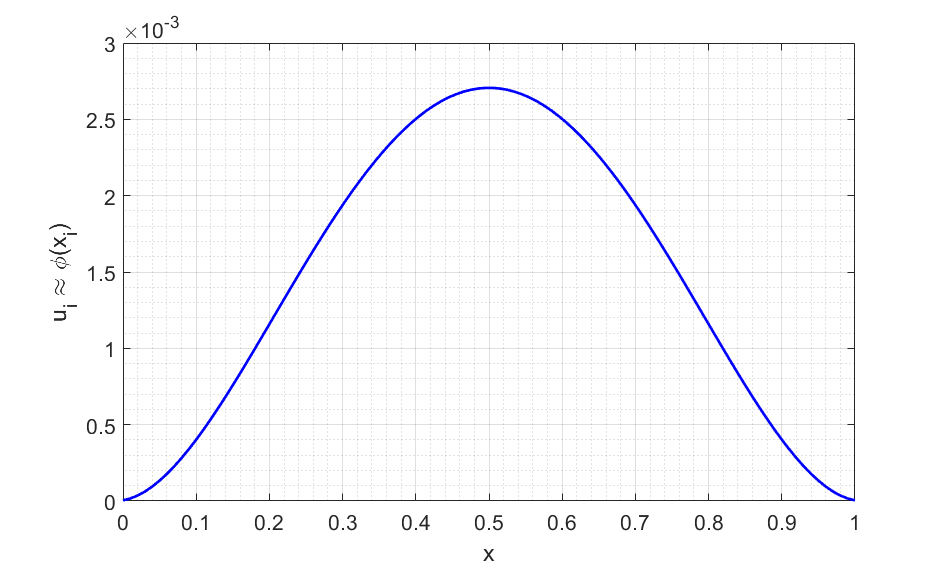}
\caption{Solution at convergence with $K = 1$, $n = 100$: clamped-free (left), clamped-clamped (right)} \label{fig1}
\end{figure}

For both cases, the actual convergence rates are lower than the estimate (Tables \ref{table1}--\ref{table4}), with increasing gaps between the two as $K$ increases. As $\|A^{-1}\|_p$ is exact, except for $p = 2$, (due to the explicit inverse of $A$), this suggests that the gap in the convergence rate is mainly due to the estimate $\|G\|_p < 1$.  The numerical experiments suggest that the simple fixed-point method \eqref{fixed1} can be used for a wider range of $K$ than suggested by the theoretical results. For instance, with $K = 386$ and $n = 99$, we have $L_p = 1.006$. The method still however converges to the solution at the maximum rate of $0.5278$.
\begin{table}[H] 
	\caption{Observed maximum convergence rate for clamped-free case, with $n = 50$. In brackets are the theoretical rate based on Theorem~\ref{theo04}.} \label{table1}
	\begin{center}	
		\begin{tabular}{c|cccccccc}
			\hline
			& \multicolumn{8}{c}{$p = $} \\ \cline{2-9}
			$K$  & \multicolumn{2}{c}{$1$} && \multicolumn{2}{c}{$2$} && \multicolumn{2}{c}{$\infty$}  \\ \hline
			$1/8$  & 0.010 & [0.016] && 0.010 & [0.016] && 0.010 & [0.017] \\
			1      & 0.074 & [0.125] && 0.074 & [0.125] && 0.074 & [0.125] \\
			8      & 0.400 & [1.000] && 0.400 & [1.000] && 0.402 & [1.000] \\ \hline
		\end{tabular}
	\end{center}
\end{table}


\begin{table}[H] 
	\caption{Observed maximum convergence rate for clamped-free case, with $n = 99$. In brackets are the theoretical rate based on Theorem~\ref{theo04}.} \label{table2}
	\begin{center}
		\begin{tabular}{c|cccccccc}
			\hline
			& \multicolumn{8}{c}{$p = $} \\ \cline{2-9}
			$K$ & \multicolumn{2}{c}{$1$} && \multicolumn{2}{c}{$2$} && \multicolumn{2}{c}{$\infty$}  \\ \hline
			$1/8$  & 0.010 & [0.016] && 0.010 & [0.016] && 0.010 & [0.017] \\
			1      & 0.074 & [0.125] && 0.074 & [0.125] && 0.074 & [0.125] \\
			8      & 0.400 & [1.000] && 0.400 & [1.000] && 0.402 & [1.000] \\ \hline
		\end{tabular}
	\end{center}
\end{table}

\begin{table}[H] 
	\caption{Observed maximum convergence rate for the clamped-clamped case, with $n = 49$. In brackets are the theoretical rate based on Theorem~\ref{theo33}.} \label{table3}
	\begin{center}	
		\begin{tabular}{c|cccccccc}
			\hline
			& \multicolumn{8}{c}{$p = $} \\ \cline{2-9}
			$K$  & \multicolumn{2}{c}{$1$} && \multicolumn{2}{c}{$2$} && \multicolumn{2}{c}{$\infty$}  \\ \hline
			$1/8$  & 0.0003 & [0.0033] && 0.0003 & [0.00033] && 0.0003 & [0.00033] \\
			1      & 0.0020 & [0.0026] && 0.0020 & [0.0026] && 0.0020 & [0.0026] \\
			8      & 0.0158 & [0.0209] && 0.0159 & [0.0209] && 0.0161 & [0.0209]  \\ 
			32     & 0.0615 & [0.0836] && 0.0619 & [0.0836] && 0.0627 & [0.0836] \\ 
                 128     & 0.2223 & [0.3344]  && 0.2237 & [0.3344] && 0.2262 & [0.3344] \\  \hline
		\end{tabular}
	\end{center}
\end{table}

\begin{table}[H] 
	\caption{Observed maximum convergence rate for the clamped-clamped case, with $n = 100$. In brackets are the theoretical rate based on Theorem~\ref{theo33}} \label{table4}
	\begin{center}	
		\begin{tabular}{c|cccccccc}
			\hline
			& \multicolumn{8}{c}{$p = $} \\ \cline{2-9}
			$K$  & \multicolumn{2}{c}{$1$} && \multicolumn{2}{c}{$2$} && \multicolumn{2}{c}{$\infty$}  \\ \hline
			$1/8$  & 0.0002 & [0.00033] && 0.0002 & [0.00033] && 0.0003 & [0.00033] \\
			1      & 0.0020 & [0.0026] && 0.0020 & [0.0026] && 0.0020 & [0.0026] \\
			8      & 0.0157 & [0.0208] && 0.0159 & [0.0208] && 0.0160 & [0.0208]  \\
                  32     & 0.0614 & [0.0834] && 0.0618 & [0.0834] && 0.0625 & [0.0834] \\
                128     & 0.2218 & [0.3336]  && 0.2232 & [0.3336] && 0.2257 & [0.3336] \\  \hline
		\end{tabular}
	\end{center}
\end{table}

\section{Conclusion}  \label{section.conclusion}

The explicit inverse formula for pentadiagonal matrices arising in the fourth-order nonlinear beam boundary value problem were constructed. The explicit formula helped computing some norms of their inverse, used to estimate the convergence of a fixed-point iteration for solving the nonlinear system of equations. Further research on the convergence upper bounds is necessary to extend our knowledge of the convergence rate in the fixed point method.

\section*{Acknowledgment} 

BK and YA wishes to acknowledge the research grant, No AP08052762, from the Ministry of Education and Science of the Republic of Kazakhstan and the Nazarbayev University Faculty Development Competitive Research Grant (NUFDCRG), Grant No 110119FD4502.

\bibliographystyle{unsrt}  
\bibliography{ref}

\end{document}